\documentclass[reqno]{amsart}

\usepackage{amssymb}
\usepackage{hyperref}
\usepackage{mathtools}
\usepackage[all]{xy}
\usepackage{ifthen}
\usepackage{xargs}
\usepackage{bussproofs}

\hypersetup{colorlinks=true,linkcolor=blue}

\newcommand{\newref}[4][]{
\ifthenelse{\equal{#1}{}}{\newtheorem{h#2}[hthm]{#4}}{\newtheorem{h#2}{#4}[#1]}
\expandafter\newcommand\csname r#2\endcsname[1]{#3~\ref{#2:##1}}
\expandafter\newcommand\csname R#2\endcsname[1]{#4~\ref{#2:##1}}
\expandafter\newcommand\csname n#2\endcsname[1]{\ref{#2:##1}}
\newenvironmentx{#2}[2][1=,2=]{
\ifthenelse{\equal{##2}{}}{\begin{h#2}}{\begin{h#2}[##2]}
\ifthenelse{\equal{##1}{}}{}{\label{#2:##1}}
}{\end{h#2}}
}

\newref[section]{thm}{theorem}{Theorem}
\newref{lem}{lemma}{Lemma}
\newref{prop}{proposition}{Proposition}
\newref{cor}{corollary}{Corollary}
\newref{cond}{condition}{Condition}
\newref{conj}{conjecture}{Conjecture}

\theoremstyle{definition}
\newref{defn}{definition}{Definition}
\newref{example}{example}{Example}

\theoremstyle{remark}
\newref{rem}{remark}{Remark}

\newcommand{\cat}[1]{\mathbf{#1}}
\newcommand{\C}{\cat{C}}

\newcommand{\Set}{\cat{Set}}
\newcommand{\sSet}{\cat{sSet}}
\newcommand{\qcat}{\cat{qcat}}
\newcommand{\K}{$\mathcal{K}$}
\newcommand{\join}{\star}
\newcommand{\fjoin}{\diamond}
\newcommand{\Hom}{\mathrm{Hom}}
\newcommand{\Map}{\mathrm{Map}}
\newcommand{\Fun}{\mathrm{Fun}}
\newcommand{\colim}{\mathrm{colim}}

\newcommand{\we}{\mathcal{W}}
\newcommand{\I}{\mathrm{I}}
\newcommand{\J}{\mathrm{J}}
\newcommand{\class}[2]{#1\text{-}\mathrm{#2}}
\newcommand{\Icell}[1][\I]{\class{#1}{cell}}
\newcommand{\Icof}[1][\I]{\class{#1}{cof}}
\newcommand{\Iinj}[1][\I]{\class{#1}{inj}}
\newcommand{\Jinj}[1][]{\Iinj[\J#1]}
\newcommand{\Jcell}[1][]{\Icell[\J#1]}
\newcommand{\Jcof}[1][]{\Icof[\J#1]}

\numberwithin{figure}{section}

\newcommand{\pb}[1][dr]{\save*!/#1-1.2pc/#1:(-1,1)@^{|-}\restore}
\newcommand{\po}[1][dr]{\save*!/#1+1.2pc/#1:(1,-1)@^{|-}\restore}

\begin{document}

\title{Model category of marked objects}

\author{Valery Isaev}

\begin{abstract}
For every functor $\mathcal{F} : \mathcal{K} \to \mathbf{C}$, where $\mathcal{K}$ is a small category and $\mathbf{C}$ is a model category which satisfies some mild hypotheses,
we define a model category $\mathbf{C}^m$ of $\mathcal{K}$-marked objects of $\mathbf{C}$.
We consider an application of this construction to the category of simplicial sets with the Joyal model structure.
Marked simplicial sets can be thought of as $(\infty,1)$-categories with some additional structure which depends on $\mathcal{F}$.
In particular, we construct a model category of quasi-categories which have limits of all diagrams of any given shape.
\end{abstract}

\maketitle

\section{Introduction}

Model categories, introduced in \cite{quillen}, are an important tool in homotopy theory and higher category theory.
For every model category $\C$ which satisfies some mild hypotheses and every functor $\mathcal{F} : \mathcal{K} \to \C$, we define a model category $\C^m$ of marked objects.
A marked object is an object of $\C$ in which some maps of the form $\mathcal{F}(K) \to X$ are marked (see \rdefn{marked-obj} for a precise definition).

If $\C$ is a category of simplicial sets with the Joyal model structure (see \cite{Joyal,lurie-topos})
and $\mathcal{K}$ is a set of objects of the form $\Delta^0 \join L$ for some simplicial set $L$,
then a marked simplicial set is a simplicial set in which some cones of some diagrams are marked.
The idea is that a cone should be marked if and only if it is a limit cone.
Of course, in general, this is might not hold, but we will construct a model structure on the category of marked simplicial sets in which fibrant objects have this property (see \rprop{mark-fib-obj}).
Moreover, every diagram of a given shape in a fibrant marked simplicial set has a limit.
Thus, this model category represents the $(\infty,1)$-category of $(\infty,1)$-categories in which limits of certain diagrams exist (and functors between $(\infty,1)$-categories that preserve them).

Of course, there is a dual model category of simplicial sets with marked cocones which represents the $(\infty,1)$-category of $(\infty,1)$-categories in which colimits of certain diagrams exist.
By choosing another category $\mathcal{K}$, it should be possible to construct model categories that represent $(\infty,1)$-categories of $(\infty,1)$-categories with other categorical structures,
such as the $(\infty,1)$-category of (locally) Cartesian closed $(\infty,1)$-categories.
We will not discuss such constructions in this paper.

In section~\ref{sec:marked}, we define categories of marked objects and prove some of their properties.
In section~\ref{sec:model-structure}, we construct model structures on categories of marked objects.
In section~\ref{sec:marked-simp-sets}, we consider a localization of a model category of marked simplicial sets and give a characterization of fibrant objects in this model category.

\section{Category of marked objects}
\label{sec:marked}

In this section for every combinatorial model category $\C$, we define a new model category $\C^m$ of marked objects of $\C$.
This model category is usually not useful by itself.
The idea is that we should take a left Bousfield localization of $\C^m$ to obtain an interesting model category.
We will show examples of this construction in the next section.

\begin{defn}[marked-obj]
Let $\C$ be a category, let $\mathcal{K}$ be a small category, and let $\mathcal{F} : \mathcal{K} \to \C$ be a functor.
A \emph{\K-marked object} of $\C$ is a pair $(X,\mathcal{E})$ where $X$ is an object of $\C$ and $\mathcal{E} : \mathcal{K}^{op} \to \Set$ is a subfunctor of $\Hom(\mathcal{F}(-),X)$.
Morphisms $f : \mathcal{F}(K) \to X$ that belong to $\mathcal{E}$ will be called \emph{marked}.
A morphism of marked objects is a morphism of the underlying objects that preserves marked morphisms.
The category of marked objects will be denoted by $\C^m$.
\end{defn}

We will sometimes omit mention of $\mathcal{F}$ and identify an object $K$ of $\mathcal{K}$ with its image in $\C$.

Let $S$ be a set of maps of the form $\mathcal{F}(K) \to X$.
Then we will write $GS : \mathcal{K}^{op} \to \Set$ for the subfunctor of $\Hom(\mathcal{F}(-),X)$ generated by $S$.
A map $f : \mathcal{F}(K) \to X$ belongs to $GS$ if and only if it factors as $\mathcal{F}(K) \xrightarrow{\mathcal{F}(k)} \mathcal{F}(K') \xrightarrow{f'} X$
for some $k : K \to K'$ and some $f' \in S$.

Forgetful functor $U : \C^m \to \C$ has a left adjoint $(-)^\flat : \C \to \C^m$ and a right adjoint $(-)^\sharp : \C \to \C^m$.
For every $X \in \C$, $X^\flat$ is the marked object in which no morphisms are marked (that is, $X^\flat = (X,\varnothing)$),
and $X^\sharp$ is the marked object in which all morphisms are marked (that is, $X^\sharp = (X,\coprod_{K \in \mathcal{K}} \Hom(K,X))$).
Objects of the form $X^\flat$ and of the form $X^\sharp$ will be called \emph{flat} and \emph{sharp} respectively.

Category $\C^m$ has the same limits and colimits as $\C$.
Let $D : J \to \C^m$ be a diagram.
Then underlying objects of $\lim(D)$ and $\colim(D)$ are a limit and a colimit of underlying objects in $\C$ respectively.
Morphism $K \to \lim(D)$ is marked if and only if morphism $K \to \lim(D) \to D_j$ is marked for every $j \in J$.
Morphism $K \to \colim(D)$ is marked if and only if it factors through some marked morphism $K \to D_j$.

\begin{prop}[mark-comb]
If $\C$ is locally presentable, then so is $\C^m$.
\end{prop}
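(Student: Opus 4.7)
The plan is to combine the fact, recorded in the paragraph preceding the proposition, that $\C^m$ has all small limits and colimits with a proof that $\C^m$ is accessible; since any cocomplete accessible category is locally presentable, this is sufficient.

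To establish accessibility, I would realise $\C^m$ as a pseudo-pullback in the $2$-category of accessible categories and accessible functors. Set $\mathcal{P} = [\mathcal{K}^{op},\Set]$ and define $h \colon \C \to \mathcal{P}$ by $h(X) = \Hom(\mathcal{F}(-), X)$. Since $\mathcal{K}$ is small and $\C$ is locally presentable, one may pick a regular cardinal $\lambda$ such that $\mathcal{F}(K)$ is $\lambda$-presentable in $\C$ for every $K \in \mathcal{K}$; then each $\Hom(\mathcal{F}(K),-)$ preserves $\lambda$-filtered colimits, and hence so does $h$, which is therefore accessible. Next, let $\mathrm{Mono}(\mathcal{P}) \subseteq \mathcal{P}^{\to}$ denote the full subcategory of the arrow category on the monomorphisms. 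Since filtered colimits in the presheaf topos $\mathcal{P}$ preserve monomorphisms, $\mathrm{Mono}(\mathcal{P})$ is closed under all limits and all filtered colimits in $\mathcal{P}^{\to}$, hence is itself locally presentable; and the codomain projection $\mathrm{cod} \colon \mathrm{Mono}(\mathcal{P}) \to \mathcal{P}$ is accessible.

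Now consider the pseudo-pullback of the cospan $\C \xrightarrow{h} \mathcal{P} \xleftarrow{\mathrm{cod}} \mathrm{Mono}(\mathcal{P})$. Its objects are triples consisting of $X \in \C$, a monomorphism $m \colon E \hookrightarrow F$ in $\mathcal{P}$, and an isomorphism $F \cong h(X)$; up to the evident equivalence such a triple is the same datum as a subfunctor $\mathcal{E} \hookrightarrow \Hom(\mathcal{F}(-), X)$, i.e., a $\mathcal{K}$-marked object, and morphisms correspond likewise. Since pseudo-pullbacks of accessible categories along accessible functors are again accessible (a standard result; see Adamek--Rosicky, \emph{Locally Presentable and Accessible Categories}, Chapter~2), $\C^m$ is accessible, and combined with cocompleteness this yields local presentability.

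The main technical point is the accessibility of $h$, which relies essentially on the smallness of $\mathcal{K}$ and the local presentability of $\C$; the identification of the pseudo-pullback with $\C^m$ is a routine unwinding.
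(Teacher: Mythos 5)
Your proposal is correct, but it reaches the conclusion by a genuinely different route than the paper. The paper verifies local presentability directly: it fixes a regular cardinal $\lambda$ for which $\C$ is locally $\lambda$-presentable and every $\mathcal{F}(K)$ is $\lambda$-presentable, exhibits a small colimit-dense set of objects (all markings of a generating set of objects of $\C$, using that each object carries only a set of markings), and then shows by an explicit cofinality argument that each marked object $(X,\mathcal{E})$ is $\mu$-presentable whenever $X$ and the $\mathcal{F}(K)$ are $\mu$-presentable and $|\mathcal{E}|<\mu$. You instead present $\C^m$, up to equivalence, as a pseudo-pullback of $\C \to [\mathcal{K}^{op},\Set] \leftarrow \mathrm{Mono}([\mathcal{K}^{op},\Set])$ and invoke the limit theorem for accessible categories, finishing with cocompleteness (which both arguments import from the paragraph preceding the proposition) plus the fact that an accessible cocomplete category is locally presentable. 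The smallness input is the same in both proofs (accessibility of $h$ is exactly the $\lambda$-presentability of the $\mathcal{F}(K)$), and your identification of the pseudo-pullback with $\C^m$ is indeed routine, since the mediating map of subfunctors in a morphism of the pseudo-pullback is unique when it exists, so morphisms reduce to underlying morphisms preserving marked maps. What each approach buys: yours is shorter modulo the cited machinery and isolates the conceptual content; the paper's is self-contained and records explicit presentability ranks of marked objects, which is useful downstream (e.g.\ for small object arguments such as the one in the proof of the smallness claim itself). One minor point: your justification that $\mathrm{Mono}([\mathcal{K}^{op},\Set])$ is locally presentable via closure under limits and filtered colimits leans on the reflection theorem; for a presheaf category it is perhaps cleaner to observe that levelwise injections form a full subcategory of the arrow category equivalent to the functor category $[\mathcal{K}^{op},\mathrm{Inj}(\Set)]$ into the locally finitely presentable category of set inclusions. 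Either way the fact is standard and true, so this is a matter of citation rather than a gap.
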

\begin{proof}
First, let us prove that there exists a set of objects $S^m$ of $\C^m$ which generates the whole category under colimits.
Let $\lambda$ be a regular cardinal such that $\C$ is locally $\lambda$-presentable and for every $K \in \mathcal{K}$, $\mathcal{F}(K)$ is $\lambda$-presentable.
Let $S$ be a set of objects of $\C$ such that every object of $X$ is a $\lambda$-filtered colimit of objects from $S$.
Note that for every object $X$ of $\C$, there is only a set of marked objects $Y$ such that $U(Y) = X$.
Let $S^m$ be the set of objects $Y$ such that $U(Y) \in S$.
Let $X$ be a marked object, and let $D : J \to \C$ be a $\lambda$-filtered diagram such that $\colim(D) = U(X)$.
Then define diagram $D' : J \to \C^m$ so that $U(D'_j) = D_j$ and $f : K \to D_j$ is marked if and only if $K \xrightarrow{f} D_j \to \colim(D) = U(X)$ is marked.
To prove that $\colim(D') = X$, we need to show that $f : K \to U(X)$ is marked in $X$ only if it factors through some $D_j \to U(X)$.
But this follows from the fact that $D$ is $\lambda$-filtered and $K$ is $\lambda$-presentable.

Now, let us prove that every object of $\C^m$ is small.
Let $(X,\mathcal{E})$ be a marked object, and let $\lambda$ be a regular cardinal such that
for every $K \in \mathcal{K}$, $\mathcal{F}(K)$ is $\lambda$-presentable, $X$ is $\lambda$-presentable, and $|\mathcal{E}| < \lambda$.
Let $D : J \to \C^m$ be a $\lambda$-filtered diagram.
It is easy to see that $\colim_{j \in J} \Hom((X,\mathcal{E}), D_j) \to \Hom((X,\mathcal{E}), \colim_{j \in J} D_j)$ is injective.
Let us show that it is surjective.
Let $f : (X,\mathcal{E}) \to \colim_{j \in J} D_j$ be a map of marked objects.
Then $U(f)$ factors as $X \xrightarrow{g} U(D_j) \to \colim_{j \in J} U(D_j)$ for some $j$.
By the description of colimits that we gave above, for every marked $k : K \to X$, map $U(f) \circ k$ factors through some marked $h_k : K \to U(D_k)$.
Since $K$ is $\lambda$-presentable and maps $K \xrightarrow{h_k} U(D_k) \to \colim_{j \in J} U(D_j)$
and $K \xrightarrow{k} X \xrightarrow{g} U(D_j) \to \colim_{j \in J} U(D_j)$ are equal,
there exists an object $D'_k$ and maps $D_j \to D'_k$ and $D_k \to D'_k$ in the diagram such that
$K \xrightarrow{h_k} U(D_k) \to U(D'_k)$ and $K \xrightarrow{k} X \xrightarrow{g} U(D_j) \to U(D'_k)$ are also equal.
In particular, $K \xrightarrow{k} X \xrightarrow{g} U(D_j) \to U(D'_k)$ is marked.
Finally, since $D$ is $\lambda$-filtered and $|\mathcal{E}| < \lambda$, there exists an object $D_i$ together with maps $D'_k \to D_i$ for every $k$.
For every $k : K \to X$, map $K \xrightarrow{k} X \xrightarrow{g} U(D_j) \to U(D_i)$ is marked.
Hence $X \xrightarrow{g} U(D_j) \to U(D_i)$ is a morphism of marked objects.
Thus $f : (X,\mathcal{E}) \to \colim_{j \in J} D_j$ factors through $D_i \to \colim_{j \in J} D_j$.
\end{proof}

Now, we assume that there is a structure of a model category on $\C$ such that every object in the image of $\mathcal{F}$ is cofibrant.
We will say that a map $f : X \to Y$ of marked objects is a \emph{cofibration} if and only if $U(f)$ is a cofibration in $\C$. 
\begin{prop}
If $\I$ is a set of generating cofibrations of $\C$, then cofibrations of $\C^m$ are generated by the set $\I^m$ which consists of the following maps:
\begin{enumerate}
\item Map $i^\flat$ for every $i \in \I$.
\item Map $K^\flat \to (K, G \{ id : K \to K \})$ for every $K \in \mathcal{K}$.
\end{enumerate}
\end{prop}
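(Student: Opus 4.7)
The plan is to invoke the small object argument, which is available because $\C^m$ is locally presentable by \rprop{mark-comb}, and to identify $\Icof[\I^m]$ with the class of maps $f$ such that $U(f)$ is a cofibration of $\C$. The functor $U$ has both a left adjoint $(-)^\flat$ and a right adjoint $(-)^\sharp$, and hence preserves colimits, so $U$ will send every map in $\Icell[\I^m]$ to a cofibration of $\C$; combined with the computations $U(i^\flat) = i$ and $U(K^\flat \to (K, G\{id : K \to K\})) = \mathrm{id}_K$, this yields the inclusion $\Icof[\I^m] \subseteq \{\text{cofibrations}\}$.

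For the reverse inclusion I would first analyse $\Iinj[\I^m]$. A morphism $(K, G\{id : K \to K\}) \to B$ of marked objects is the same as a marked morphism $K \to B$, because the generating marking is closed under precomposition with arrows of $\mathcal{K}$. Hence $p : A \to B$ lies in $\Iinj[\I^m]$ precisely when $U(p)$ is a trivial fibration in $\C$ and, whenever a map $k : K \to A$ satisfies that $p \circ k$ is marked in $B$, the map $k$ is already marked in $A$. Given $f$ with $U(f)$ a cofibration, I would then factor $f = p \circ g$ with $g \in \Icell[\I^m]$ and $p \in \Iinj[\I^m]$ via the small object argument. Since $U(g)$ is a cofibration and $U(p)$ is a trivial fibration, the cofibration $U(f)$ lifts against $U(p)$ and produces a section $l$ of $U(p)$ with $l \circ U(f) = U(g)$.

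The main obstacle will be to upgrade this underlying lift $l$ to a morphism of marked objects, so that $f$ becomes a retract of $g$ in $\C^m$ and therefore lies in $\Icof[\I^m]$. This is precisely what generator~(2) is designed for: for any marked $k : K \to Y$, the composite $p \circ l \circ k$ equals $k$ and so is marked in $Y$, and the characterisation of $\Iinj[\I^m]$ then forces $l \circ k$ to be marked in $Z$. Once this reflection property is verified, the retract argument concludes the proof.
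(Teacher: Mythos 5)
Your proposal is correct, but it takes a genuinely different route from the paper. You characterize $\Iinj[\I^m]$ (maps whose underlying map is a trivial fibration and which reflect marked maps), produce a factorization $f = p \circ g$ with $g \in \Icell[\I^m]$ and $p \in \Iinj[\I^m]$ via the small object argument, lift the underlying cofibration against the underlying trivial fibration, and use generator~(2) to upgrade the lift to a morphism of marked objects, exhibiting $f$ as a retract of $g$. The paper instead argues entirely by hand: it treats the case where $U(f)$ is an isomorphism (then $f$ is a single pushout of a coproduct of copies of $K^\flat \to (K, G\{id\})$ indexed by the marked maps of the codomain), then the case where $U(f)$ is a relative $\Icell$ complex (lifting the cell attachments flatly and appending the previous case), and finally the general retract case. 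Both arguments hinge on the same observation about what generator~(2) does, and both are valid; what the paper's version buys is independence from the small object argument. This matters because the proposition is also used in case~\eqref{it:all-fib} of \rthm{mark-main}, where $\C$ is not assumed locally presentable, so \rprop{mark-comb} is unavailable and the smallness of the domains $K^\flat$ of $\I^m$ relative to $\Icell[\I^m]$ is not given; your argument as written is confined to the combinatorial case. The paper's construction also yields the slightly more explicit statement that every cofibration is a retract of a relative $\I^m$-cell complex obtained from any cell presentation of its underlying map. If you add a smallness hypothesis on the objects of $\mathcal{K}$ (or restrict to combinatorial $\C$), your shorter argument goes through as stated.
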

\begin{proof}
Since every map in $\I^m$ is a cofibration and $U$ preserves colimits, $\Icof[\I^m]$ consists of cofibrations.
Let us show that every cofibration $f : X \to Y$ belongs to $\Icof[\I^m]$.
First, assume that $U(f)$ is an isomorphism.
Then $f$ is the following pushout:
\[ \xymatrix{ \coprod\limits_\mathcal{E} K^\flat \ar[r] \ar[d] & X \ar[d] \\
              \coprod\limits_\mathcal{E} (K, G \{id\}) \ar[r] & \po Y,
            } \]
where $\mathcal{E}$ is the set of marked maps of $Y$.

Now, assume that $U(f)$ is a relative $\Icell$ complex.
Then it is a transfinite composition $X_0 \to X_\lambda$, where for each $\alpha$, $X_\alpha \to X_{\alpha+1}$ is a pushout of a map $U_\alpha \to V_\alpha$ from $\I$.
We define a transfinite sequence $X = X'_0 \to \ldots \to X'_\lambda$ such that $U(X'_\alpha \to X'_\beta)$ equals to $X_\alpha \to X_\beta$ for every $\alpha$ and $\beta$.
Map $X'_\alpha \to X'_{\alpha+1}$ is defined as a pushout of $U^\flat_\alpha \to V^\flat_\alpha$.
Then $f$ factors as $X \to X'_\lambda \xrightarrow{g} Y$ so that $U(g)$ is an isomorphism.
Hence $f$ is a relative $\Icell[\I^m]$ complex.

Finally, in the general case, $U(f)$ is a retract of a relative $\Icell$ complex $U(X) \to Z$.
Let $Z'$ be the marked object such that $U(Z') = Z$ and $K \to Z$ is marked if and only if it factors through $Y \to Z$.
Then $f$ is a retract of $X \to Z'$.
Hence $f$ belongs to $\Icof[\I^m]$.
\end{proof}

Let $\mathcal{J}$ be a set of cofibrant marked objects.
We are going to define a model structure on $\C^m$ such that $\I^m$ is a set of generating cofibrations,
and for every $J \in \mathcal{J}$, map $U(J)^\flat \to J$ is a trivial cofibration.

Let us define a fibrant replacement functor $R^m$ for $\C^m$.
Let $R : \C \to \C$, $t_X : X \to R(X)$ be a fibrant replacement functor for $\C$.
For every marked $X$, underlying object of $R^m(X)$ is $R(U(X))$, and a map $K \to R(U(X))$ is marked if and only if it is homotopic to a map that
either factors as $K \to U(X) \xrightarrow{t_{U(X)}} R(U(X))$, where the first map is marked,
or factors through a marked map $K \to U(J)$ for some $J \in \mathcal{J}$.

We will say that a map $f$ of marked objects is a \emph{weak equivalence}
if and only if the underlying map of $f$ is a weak equivalence and $R^m(f)$ reflects marked maps.

\begin{prop}[2-out-of-3]
Weak equivalences of marked objects satisfy 2-out-of-3 property.
\end{prop}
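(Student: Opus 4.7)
The plan is to check the two components of the weak-equivalence definition separately. The condition that the underlying map be a weak equivalence in $\C$ inherits 2-out-of-3 from the model structure on $\C$, since $U$ preserves composition. So the real content lies in showing that the condition ``$R^m(-)$ reflects marked maps'' also satisfies 2-out-of-3. Since $R^m$ is functorial and every morphism of marked objects preserves marked maps by definition, I expect two of the three implications to be essentially formal.

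Specifically, if both $f$ and $g$ are weak equivalences, then given $k : \mathcal{F}(K) \to R(U(X))$ with $R^m(gf) \circ k = R^m(g) \circ R^m(f) \circ k$ marked in $R^m(Z)$, reflection by $R^m(g)$ shows $R^m(f) \circ k$ is marked, and then reflection by $R^m(f)$ shows $k$ is marked. If instead $g$ and $gf$ are weak equivalences and $R^m(f) \circ k$ is marked for some $k : \mathcal{F}(K) \to R(U(X))$, then $R^m(gf) \circ k$ is marked since $R^m(g)$ preserves marked maps, so $k$ is marked by reflection of $R^m(gf)$.

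The main obstacle is the remaining case: $f$ and $gf$ weak equivalences implying that $g$ is a weak equivalence. Here a marked map $R^m(g) \circ k$ for $k : \mathcal{F}(K) \to R(U(Y))$ cannot be directly pulled back along $R^m(f)$. My plan is to use that $R(U(f))$ is a weak equivalence between fibrant objects of $\C$ and that $\mathcal{F}(K)$ is cofibrant, so the induced map $[\mathcal{F}(K), R(U(X))] \to [\mathcal{F}(K), R(U(Y))]$ on homotopy classes is a bijection. In particular, one can choose $\tilde k : \mathcal{F}(K) \to R(U(X))$ with $R^m(f) \circ \tilde k$ homotopic to $k$. Then $R^m(gf) \circ \tilde k = R^m(g) \circ R^m(f) \circ \tilde k$ is homotopic to the marked map $R^m(g) \circ k$, hence is itself marked because marked maps in every $R^m(Z)$ are closed under homotopy by the construction of $R^m$. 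Reflection by $R^m(gf)$ then yields $\tilde k$ marked in $R^m(X)$; applying $R^m(f)$ gives that $R^m(f) \circ \tilde k$ is marked by preservation, and a final appeal to closure under homotopy shows that $k$ is marked.

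The only non-formal ingredient is thus the lifting step in the third case, which relies crucially on the hypothesis that every $\mathcal{F}(K)$ is cofibrant (so homotopy classes of maps out of $\mathcal{F}(K)$ are well-behaved) and on the fact that the mark-defining clauses for $R^m$ are explicitly stated up to homotopy.
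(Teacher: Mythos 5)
Your proposal is correct and follows essentially the same route as the paper: the two formal cases are handled by preservation/reflection of marked maps, and the remaining case ($f$ and $gf$ weak equivalences imply $g$ is one) is settled by lifting $k$ along $R(U(f))$ up to homotopy, using that $\mathcal{F}(K)$ is cofibrant, that $R(U(f))$ is a weak equivalence between fibrant objects, and that marked maps in $R^m(-)$ are closed under homotopy by construction. No gaps.
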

\begin{proof}
Let $f : X \to Y$ and $g : Y \to Z$ be maps of marked objects such that $U(f)$ and $U(g)$ are equivalences.
If $R^m(f)$ and $R^m(g)$ reflect marked maps, then so is $R^m(g \circ f)$.
If $R^m(g \circ f)$ reflects marked maps, then so is $R^m(f)$.
Let us prove that if $R^m(g \circ f)$ reflects marked maps, then so is $R^m(g)$.
Let $k : K \to R(U(Y))$ be a map such that $R(U(g)) \circ k$ is marked.
Since $K$ is cofibrant and $R(U(f))$ is a weak equivalence between fibrant objects,
there exists a map $k' : K \to R(U(X))$ such that $R(U(f)) \circ k'$ is homotopic to $k$.
Since $R(U(g)) \circ k$ and $R(U(g \circ f)) \circ k'$ are homotopic and $R(U(g)) \circ k$ is marked, $R(U(g \circ f)) \circ k'$ is also marked.
Since $g \circ f$ reflects marked maps, $k'$ is marked.
Since $R(U(f)) \circ k'$ and $k$ are homotopic and $R(U(f)) \circ k'$ is marked, $k$ is also marked.
\end{proof}

The following lemma gives a useful explicit description of weak equivalences.

\begin{lem}[we]
If $f : X \to Y$ is a map of marked objects such that $U(f)$ is a weak equivalence,
then it is a weak equivalence if and only if for every $h : K \to R(U(X))$ and every marked $k : K \to U(Y)$,
if $R(U(f)) \circ h$ and $t_{U(Y)} \circ k$ are homotopic, then $h$ is marked in $R^m(X)$.
\end{lem}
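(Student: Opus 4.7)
My plan is to prove both implications by directly unfolding the definition of $R^m$ and the definition of ``reflects marked maps.'' The forward direction will essentially be a tautology, whereas the backward direction requires some extra work to reduce clause~(b) in the definition of $R^m$ (maps factoring through some $U(J)$) to the clause~(a) situation described in the hypothesis.

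For the forward direction, assume $f$ is a weak equivalence and let $h$, $k$ be as in the statement. Then $t_{U(Y)} \circ k$ is a map of exactly the form listed in clause~(a) of the definition of $R^m(Y)$, so it is marked in $R^m(Y)$. Since the collection of marked maps of $R^m(Y)$ is closed under homotopy by construction, $R^m(f) \circ h = R(U(f)) \circ h$ is marked too, and since $R^m(f)$ reflects marked maps, $h$ is marked in $R^m(X)$.

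For the backward direction, assume the condition and take $h : K \to R(U(X))$ with $R^m(f) \circ h$ marked in $R^m(Y)$. By definition, $R(U(f)) \circ h$ is homotopic to either (a) $t_{U(Y)} \circ k$ for some marked $k : K \to U(Y)$, in which case the hypothesis directly yields that $h$ is marked; or (b) a composition $K \xrightarrow{k} U(J) \xrightarrow{g} R(U(Y))$ with $k$ marked and $J \in \mathcal{J}$. Case~(b) is the main obstacle, since the hypothesis mentions only clause~(a).

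The key point for case~(b) is that $U(J)$ is cofibrant (by the standing assumption that $\mathcal{J}$ consists of cofibrant marked objects) and $R(U(f))$ is a weak equivalence between fibrant objects; hence the standard Whitehead-type fact supplies a bijection $[A, R(U(X))] \to [A, R(U(Y))]$ on homotopy classes out of any cofibrant $A$. Applied with $A = U(J)$, this produces $g' : U(J) \to R(U(X))$ with $R(U(f)) \circ g' \simeq g$. Then $g' \circ k$ factors through the marked map $k$ and is therefore marked in $R^m(X)$ via clause~(b) of the definition of $R^m$. Moreover $R(U(f)) \circ (g' \circ k) \simeq g \circ k \simeq R(U(f)) \circ h$, and applying the same bijection with $A = K$ (which is cofibrant by our standing assumption that every object in the image of $\mathcal{F}$ is cofibrant) gives $g' \circ k \simeq h$. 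Hence $h$ is marked in $R^m(X)$, and the backward direction is complete.
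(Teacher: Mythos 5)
Your proof is correct and follows essentially the same route as the paper's: both unfold the definition of $R^m(Y)$ into the two clauses and reduce the factoring-through-$U(J)$ case to the other one by lifting $g$ to $g' : U(J) \to R(U(X))$ along the weak equivalence of fibrant objects, concluding $h \simeq g' \circ k$ and hence that $h$ is marked. The only difference is that you spell out the (tautological) forward direction explicitly, which the paper leaves implicit.
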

\begin{proof}
By the definition, $f$ is a weak equivalence if and only if for every $h : K \to R(U(X))$ such that $R(U(f)) \circ h$ is marked in $Y$, $h$ is marked in $R^m(X)$.
Map $R(U(f)) \circ h$ is marked in $R^m(Y)$ if and only if either there exists marked map $k : K \to U(Y)$ such that $t_{U(Y)} \circ k$ is homotopic to $R(U(f)) \circ h$,
or there exist $J \in \mathcal{J}$, a marked map $k : K \to U(J)$ and a map $g : U(J) \to R(U(Y))$ such that $g \circ k$ is homotopic to $R(U(f)) \circ h$.
We just need to prove that if the second case holds, then $h$ is marked in $R^m(X)$.

Since $U(J)$ is cofibrant and $R(U(f))$ is a weak equivalence between fibrant objects,
there exists a map $g' : U(J) \to R(U(X))$ such that $R(U(f)) \circ g'$ is homotopic to $g$.
Hence $R(U(f)) \circ h$ is homotopic to $R(U(f)) \circ g' \circ k$.
Since $R(U(f))$ is a weak equivalence between fibrant objects, this implies that $h$ is homotopic to $g' \circ k$.
\end{proof}

\begin{prop}[iinj]
Every map $f : X \to Y \in \Iinj[\I^m]$ is a weak equivalence.
\end{prop}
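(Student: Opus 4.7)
The plan is to first unpack what $f \in \Iinj[\I^m]$ means in terms of the two classes of generating cofibrations. Right lifting property against $i^\flat$ for $i \in \I$ reduces (using the $\flat$-$U$ adjunction) to $U(f) \in \Iinj$, i.e.\ $U(f)$ is a trivial fibration in $\C$. Right lifting property against $K^\flat \to (K, G\{id_K\})$ translates, after observing that a marked map out of $(K, G\{id_K\})$ is determined by its underlying map and that this map is itself forced to be marked, to the statement that $f$ reflects marked maps: for every $k : K \to U(X)$, if $U(f) \circ k$ is marked in $Y$, then $k$ is marked in $X$.

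With this in hand, I would apply \rlem{we}. Since $U(f)$ is a trivial fibration, in particular a weak equivalence, it suffices to check the explicit criterion: given any $h : K \to R(U(X))$ and any marked $k : K \to U(Y)$ with $R(U(f)) \circ h$ homotopic to $t_{U(Y)} \circ k$, the map $h$ must be marked in $R^m(X)$.

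To verify this, I would first lift $k$ through $U(f)$: since $U(f)$ is a trivial fibration and $K = \mathcal{F}(K)$ is cofibrant by hypothesis, there exists $k' : K \to U(X)$ with $U(f) \circ k' = k$. Because $f$ reflects marked maps, $k'$ is marked in $X$, and hence $t_{U(X)} \circ k' : K \to R(U(X))$ is marked in $R^m(X)$ by the very definition of $R^m$. Now compute
\[ R(U(f)) \circ (t_{U(X)} \circ k') \;=\; t_{U(Y)} \circ U(f) \circ k' \;=\; t_{U(Y)} \circ k \;\simeq\; R(U(f)) \circ h. \]
Since $R(U(f))$ is a weak equivalence between fibrant objects and $K$ is cofibrant, the induced map on homotopy classes $[K, R(U(X))] \to [K, R(U(Y))]$ is a bijection, so $h \simeq t_{U(X)} \circ k'$. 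As $R^m(X)$-marks are closed under homotopy by construction, $h$ is marked in $R^m(X)$, completing the verification.

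I expect the main conceptual hurdle to be the translation step at the beginning, namely correctly reading off what RLP against $K^\flat \to (K, G\{id_K\})$ says about reflection of marked maps; the rest is a direct homotopy-lifting argument using cofibrancy of the objects $\mathcal{F}(K)$ and the fact that $R(U(f))$ is a weak equivalence between fibrant objects, combined with the homotopy-closure built into the definition of $R^m$.
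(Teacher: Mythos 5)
Your proof is correct and takes essentially the same approach as the paper: both reduce to the criterion of \rlem{we}, produce a marked lift $k'$ of $k$ through $f$ using the lifting properties encoded in $\I^m$, and conclude via the computation $R(U(f)) \circ t_{U(X)} \circ k' \sim R(U(f)) \circ h$ together with the fact that $R(U(f))$ is a weak equivalence between fibrant objects. The only (cosmetic) difference is that the paper obtains the marked lift in one step by lifting $(K, G\{id\}) \to Y$ against $f$, whereas you decompose this into an unmarked lift through the trivial fibration $U(f)$ followed by the reflection of marked maps.
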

\begin{proof}
Let $k : K \to U(Y)$ be a marked map, and let $h : K \to R(U(X))$ be a map such that $R(U(f)) \circ h$ and $t_{U(Y)} \circ k$ are homotopic.
Since $(K, G \{id\})$ is cofibrant, we have a lift in the following diagram:
\[ \xymatrix{                                    & X \ar[d]^f \\
              (K, G \{id\}) \ar[r] \ar@{-->}[ur] & Y
            } \]
Hence there exists a marked map $k' : K \to U(X)$ such that $U(f) \circ k' = k$.
Note that $R(U(f)) \circ t_{U(X)} \circ k' = t_{U(Y)} \circ U(f) \circ k' = t_{U(Y)} \circ k \sim R(U(f)) \circ h$.
Since $R(U(f))$ is a weak equivalence between fibrant objects, this implies that $t_{U(X)} \circ k'$ is homotopic to $h$.
Hence $h$ is marked in $R^m(X)$, and by the previous lemma, $f$ is a weak equivalence.
\end{proof}

The following lemma gives a useful characterization of trivial cofibrations.

\begin{lem}
Let $f : X \to Y$ be a cofibration of marked objects such that $U(f)$ is a weak equivalence.
Let $g : U(Y) \to R(U(X))$ be a map such that $g \circ U(f) = t_{U(X)}$.
Then $f$ is a weak equivalence if and only if for every marked $k : K \to U(Y)$, map $g \circ k$ is marked in $R^m(X)$.
\end{lem}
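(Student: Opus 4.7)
The plan is to apply \rlem{we} in both directions after establishing one auxiliary homotopy: that the two maps $R(U(f)) \circ g$ and $t_{U(Y)}$ from $U(Y)$ to $R(U(Y))$ are homotopic. Both extend the common map $t_{U(Y)} \circ U(f) = R(U(f)) \circ t_{U(X)}$ along $U(f)$ (using $g \circ U(f) = t_{U(X)}$ and naturality of $t$). Since $U(f)$ is a trivial cofibration and $R(U(Y))$ is fibrant, the standard path-object argument --- lift $(R(U(f)) \circ g,\, t_{U(Y)}) : U(Y) \to R(U(Y)) \times R(U(Y))$ along a path-object fibration $P R(U(Y)) \to R(U(Y)) \times R(U(Y))$ --- produces a right homotopy between them.

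For the forward direction I fix a marked $k : K \to U(Y)$, set $h := g \circ k$, and invoke \rlem{we}: the required homotopy $R(U(f)) \circ h \sim t_{U(Y)} \circ k$ is obtained by post-composing the auxiliary homotopy with $k$, so $g \circ k$ is marked in $R^m(X)$.

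For the backward direction I verify the criterion of \rlem{we}: let $h : K \to R(U(X))$ and marked $k : K \to U(Y)$ satisfy $R(U(f)) \circ h \sim t_{U(Y)} \circ k$. Combining with the auxiliary homotopy yields $R(U(f)) \circ h \sim R(U(f)) \circ g \circ k$; because $R(U(f))$ is a weak equivalence between fibrant objects (by 2-out-of-3 applied to $R(U(f)) \circ t_{U(X)} = t_{U(Y)} \circ U(f)$) and $K$ is cofibrant, this forces $h \sim g \circ k$. By hypothesis $g \circ k$ is marked in $R^m(X)$, and marked maps in $R^m(X)$ form a homotopy-closed class by the very definition of $R^m$, so $h$ is marked. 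The only non-bookkeeping step is the auxiliary homotopy, and it is worth flagging only because $U(Y)$ is not assumed cofibrant, which makes it more convenient to use a right homotopy into the fibrant object $R(U(Y))$ than a left homotopy out of $U(Y)$.
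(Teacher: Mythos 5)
Your proof is correct and takes essentially the same route as the paper's: both directions reduce to \rlem{we} via the single homotopy $R(U(f)) \circ g \sim t_{U(Y)}$, which the paper asserts directly from $U(f)$ being a trivial cofibration and which you justify explicitly with the path-object lifting argument, followed in the converse direction by cancelling $R(U(f))$ (a weak equivalence between fibrant objects) and using homotopy-invariance of markings in $R^m(X)$. The only differences are expository: you spell out the auxiliary homotopy that the paper leaves implicit.
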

\begin{proof}
First, suppose that $f$ is a weak equivalence.
Let $k : K \to U(Y)$ be a marked map.
Since $U(f)$ is a trivial cofibration, $R(U(f)) \circ g \circ k$ and $t_{U(Y)} \circ k$ are homotopic.
Hence, by \rlem{we}, map $g \circ k$ is marked.
Conversely, let $k : K \to U(Y)$ be a marked map, and let $h : K \to R(U(X))$ be a map such that $R(U(f)) \circ h$ is homotopic to $t_{U(Y)} \circ k$.
Note that $R(U(f)) \circ h \sim t_{U(Y)} \circ k \sim R(U(f)) \circ g \circ k$.
Since $R(U(f))$ is a weak equivalence between fibrant objects, this implies that $h$ is homotopic to $g \circ k$.
Since $g \circ k$ is marked, $h$ is also marked.
\end{proof}

Now, we need to establish the following simple lemma:

\begin{lem}
Let $f : X \to Y$ be a map of marked objects, and let $g : R(U(X)) \to R(U(Y))$ be a map such that the following diagram commutes:
\[ \xymatrix{ U(X) \ar[r]^{U(f)} \ar[d]_{t_{U(X)}} & U(Y) \ar[d]^{t_{U(Y)}} \\
              R(U(X)) \ar[r]_g & R(U(Y))
            } \]
Then $g$ preserves marked maps.
\end{lem}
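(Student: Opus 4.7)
The plan is to unpack the definition of ``marked in $R^m$'' and chase the two cases. Fix a marked map $h : K \to R(U(X))$ in $R^m(X)$; we must show that $g \circ h$ is marked in $R^m(Y)$. By definition, $h$ is homotopic to a map of one of two forms, so I would split into cases.

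\textbf{Case 1: $h \sim t_{U(X)} \circ k$ for some marked $k : K \to U(X)$.} Composing with $g$ and using commutativity of the given square, I get
\[ g \circ h \;\sim\; g \circ t_{U(X)} \circ k \;=\; t_{U(Y)} \circ U(f) \circ k. \]
Because $f$ is a morphism of marked objects, $U(f) \circ k : K \to U(Y)$ is marked in $Y$. Thus $g \circ h$ is homotopic to $t_{U(Y)}$ composed with a marked map, so it is marked in $R^m(Y)$ by the first clause of the definition of $R^m$.

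\textbf{Case 2: $h \sim g' \circ k$ for some $J \in \mathcal{J}$, some marked $k : K \to U(J)$, and some $g' : U(J) \to R(U(X))$.} Then
\[ g \circ h \;\sim\; (g \circ g') \circ k, \]
which exhibits $g \circ h$ as homotopic to a map factoring through the marked map $k : K \to U(J)$ via $g \circ g' : U(J) \to R(U(Y))$. By the second clause of the definition, $g \circ h$ is marked in $R^m(Y)$.

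There is no real obstacle; the only thing being used besides the definition is that marked maps of $Y$ are preserved by $f$ and that the square lets us rewrite $g \circ t_{U(X)}$ as $t_{U(Y)} \circ U(f)$. Both cases immediately fit one of the two clauses defining markedness in $R^m(Y)$, so the conclusion follows.
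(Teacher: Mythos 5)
Your proof is correct and follows essentially the same route as the paper's: the same case split according to the two clauses defining markedness in $R^m$, the same use of the commuting square to rewrite $g \circ t_{U(X)}$ as $t_{U(Y)} \circ U(f)$ in the first case, and the same direct composition $g \circ g' \circ k$ in the second. Nothing is missing.
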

\begin{proof}
Let $k : K \to R(U(X))$ be a marked map.
Then either there exists a marked map $k' : K \to U(X)$ such that $t_{U(X)} \circ k'$ is homotopic to $k$,
or there exist $J \in \mathcal{J}$, a marked map $k' : K \to U(J)$ and a map $h : U(J) \to R(U(X))$ such that $h \circ k'$ is homotopic to $k$.
If the first case holds, then $U(f) \circ k'$ is marked and $t_{U(Y)} \circ U(f) \circ k'$ is homotopic to $g \circ k$.
If the second case holds, then $g \circ h \circ k'$ is homotopic to $g \circ k$.
Thus $g \circ k$ is marked.
\end{proof}

\begin{prop}[pushouts-trans-comp]
Trivial cofibrations are closed under pushouts and transfinite compositions.
\end{prop}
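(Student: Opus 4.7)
The plan is to apply the characterization of trivial cofibrations from the second-to-last lemma, constructing suitable witness maps $g$ in each case and invoking the ``simple lemma'' just proved to handle compatibility with $R(U(\cdot))$.

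For pushouts, let $f : X \to Y$ be a trivial cofibration, let $h : X \to X'$ be any map of marked objects, and form the pushout $f' : X' \to Y'$ with the accompanying map $h' : Y \to Y'$. Then $U(f')$ is a trivial cofibration in $\C$. Choose any $g : U(Y) \to R(U(X))$ with $g \circ U(f) = t_{U(X)}$; by the characterization lemma, $g$ sends marked maps of $Y$ to marked maps of $R^m(X)$. Define $g' : U(Y') \to R(U(X'))$ by the pushout universal property, using $t_{U(X')}$ on the $U(X')$-side and $R(U(h)) \circ g$ on the $U(Y)$-side; naturality of $t$ makes these agree on $U(X)$, and $g' \circ U(f') = t_{U(X')}$ by construction. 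By the colimit description of marks on a pushout, a marked $k : K \to U(Y')$ either factors through a marked $k_Y : K \to U(Y)$, whence $g' \circ k = R(U(h)) \circ g \circ k_Y$ is marked in $R^m(X')$ because $g \circ k_Y$ is marked in $R^m(X)$ and $R(U(h))$ preserves marks by the simple lemma just proved, or factors through a marked $k_{X'} : K \to U(X')$, whence $g' \circ k = t_{U(X')} \circ k_{X'}$ is marked in $R^m(X')$ directly from the definition of $R^m$. The characterization lemma then yields that $f'$ is a trivial cofibration.

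For a transfinite composition $X_0 \to X_1 \to \cdots$ of trivial cofibrations with colimit $X_\lambda$, the underlying map $U(X_0) \to U(X_\lambda)$ is a trivial cofibration in $\C$. I build by transfinite induction on $\alpha \le \lambda$ a compatible family of witnesses $g_\alpha : U(X_\alpha) \to R(U(X_0))$ with $g_0 = t_{U(X_0)}$ such that $X_0 \to X_\alpha$ is a trivial cofibration. At a successor $\alpha + 1$, lift $g_\alpha$ along the trivial cofibration $U(X_\alpha) \to U(X_{\alpha+1})$ against the fibrant object $R(U(X_0))$ to obtain $g_{\alpha+1}$. Since $X_0 \to X_\alpha$ and $X_\alpha \to X_{\alpha+1}$ are both trivial cofibrations their composite is as well, by \rprop{2-out-of-3} together with closure of cofibrations under composition, so the characterization lemma applied to $g_{\alpha+1}$ gives the marked-map condition. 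At a limit $\alpha$, $g_\alpha$ is induced by the colimit universal property; a marked $k : K \to U(X_\alpha)$ factors through a marked $k' : K \to U(X_\beta)$ with $\beta < \alpha$ by the colimit description of marks, hence $g_\alpha \circ k = g_\beta \circ k'$ is marked by the inductive hypothesis, and the characterization lemma gives that $X_0 \to X_\alpha$ is a trivial cofibration. Setting $\alpha = \lambda$ concludes.

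The only real subtlety is the successor step of the induction, where we have no direct handle on marked maps of $U(X_{\alpha+1})$ that do not come from $U(X_\alpha)$; this is circumvented by first establishing that the composite $X_0 \to X_{\alpha+1}$ is itself a trivial cofibration and then using the ``every compatible $g$ works'' direction of the characterization lemma to deduce that our chosen extension $g_{\alpha+1}$ witnesses this.
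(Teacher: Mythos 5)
Your proof is correct and follows essentially the same strategy as the paper's: both parts reduce to the characterization lemma for trivial cofibrations, analyze a marked map of the pushout (resp.\ transfinite composite) by cases according to where it factors, and use the preceding ``simple lemma'' to push marks forward along $R(U(h))$. The only cosmetic difference is that you construct the witnesses $g'$ and $g_\alpha$ explicitly via universal properties, whereas the paper takes arbitrary compatible maps and compares them up to homotopy using that markings in $R^m$ are homotopy-invariant.
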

\begin{proof}
First, let us prove the latter.
Let $c : X_0 \to X_\lambda$ be a transfinite composition of a sequence $X : \lambda \to \C^m$.
By \rprop{2-out-of-3}, weak equivalences are closed under composition.
Thus we may assume that $\lambda$ is a limit ordinal.
Let $k : K \to U(X_\lambda)$ be a marked map, and let $g : U(X_\lambda) \to R(U(X_0))$ be a map such that $g \circ c = t_{U(X_0)}$.
Since $k$ is marked, it factors through some marked map $k' : K \to U(X_\alpha)$.
Since $X_0 \to X_\alpha$ is a trivial cofibration, map $K \xrightarrow{k'} U(X_\alpha) \to U(X_\lambda) \xrightarrow{g} R(U(X_0))$ is marked.
Hence $c$ is a trivial cofibration.

Now, let us prove that trivial cofibrations are closed under pushouts.
Consider the following pushout square, where $f$ is a trivial cofibration:
\[ \xymatrix{ X \ar[r]^p \ar[d]_f & Z \ar[d]^{f'} \\
              Y \ar[r]_q & \po T
            } \]
Let $k' : K \to U(T)$ be a marked map, and let $g' : U(T) \to R(U(Z))$ be a map such that $g' \circ U(f') = t_{U(Z)}$.
Then $k'$ factors through either $U(Z)$ or $U(Y)$.
If $k'$ factors through $U(Z)$, then this immediately implies that $g' \circ k'$ is marked.
If $k'$ factors through a marked map $k : K \to U(Y)$, then there exists a map $g : U(Y) \to R(U(X))$ such that $g \circ U(f) = t_{U(X)}$ and $g \circ k$ is marked.
By the previous lemma, $R(U(p)) \circ g \circ k$ is marked.
Since $f$ is a trivial cofibration, $R(U(p)) \circ g$ is homotopic to $g' \circ U(q)$.
Hence $g' \circ U(q) \circ k = g' \circ k'$ is also marked.
\end{proof}

\section{Model structure on the category of marked objects}
\label{sec:model-structure}

To construct a model structure on $\C^m$, we will need the following theorem by Jeff Smith (see, for example, \cite[Proposition~A.2.6.8]{lurie-topos}):
\begin{thm}[mod-comb]
Let $\C$ be a locally presentable category, let $\I$ be a set of maps of $\C$, and let $\we$ be a class of maps of $\C$.
Suppose that the following conditions hold:
\begin{enumerate}
\item The intersection $\Icof \cap \we$ is closed under pushouts and transfinite compositions.
\item The full subcategory $\we$ of the category of arrows of $\C$ is accessible subcategory.
\item The class $\we$ has the 2-out-of-3 property.
\item $\Iinj \subseteq \we$.
\end{enumerate}
Then there exists a cofibrantly generated model structure on $\C$ with $\Icof$ as the class of cofibrations and $\we$ as the class of weak equivalences.
\end{thm}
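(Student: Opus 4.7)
The plan is to follow Jeff Smith's original argument, which uses the accessibility hypothesis on $\we$ to replace the (proper) class $\Icof \cap \we$ by a small set $\J$ of generating trivial cofibrations; once $\J$ exists, the rest of the theorem is a double application of the small object argument.

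First I would use the small object argument with $\I$, available because $\C$ is locally presentable and hence all objects are small, to produce functorial factorizations of the form $(\Icell,\Iinj)$, and therefore of the form $(\Icof,\Iinj)$ after a standard retract argument. By condition (4) these are already candidates for (cofibration, trivial fibration) factorizations. The fibrations will be defined as the maps with the right lifting property against a set $\J$ still to be constructed, so the bulk of the work is to exhibit $\J$.

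The main and hardest step is to produce a set $\J \subseteq \Icof \cap \we$ such that every map in $\Icof \cap \we$ is a retract of a transfinite composition of pushouts of maps in $\J$. Using condition (2), I pick a regular cardinal $\kappa$ large enough that the subcategory $\we$ inside the arrow category is closed under $\kappa$-filtered colimits and generated under them by its $\kappa$-presentable objects, and that the domains and codomains of the maps in $\I$ are $\kappa$-presentable. Let $\J$ be a set of representatives for the isomorphism classes of maps in $\Icof \cap \we$ whose domain and codomain are both $\kappa$-presentable. The inclusion $\Jcof \subseteq \Icof \cap \we$ is immediate from condition (1); the reverse inclusion is obtained by a cell-by-cell solution-set argument, writing a given $f \in \Icof \cap \we$ as a $\kappa$-filtered colimit of $\kappa$-presentable subcofibrations and using the accessibility of $\we$ together with condition (1) to arrange that these pieces lie in $\J$.

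With $\J$ in hand, a second small object argument yields functorial (trivial cofibration, fibration) factorizations. It remains to verify that every map $f \in \Jinj \cap \we$ already lies in $\Iinj$: factoring $f = p \circ i$ with $i \in \Icof$ and $p \in \Iinj$, condition (3) together with $\Iinj \subseteq \we$ forces $i \in \Icof \cap \we = \Jcof$, so $f$ lifts against $i$ and the retract argument puts $f$ in $\Iinj$. The remaining model category axioms follow formally. The main obstacle is the solution-set step producing $\J$; the rest is bookkeeping on top of the small object argument.
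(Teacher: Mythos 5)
This theorem is not proved in the paper at all: it is quoted as Jeff Smith's theorem with a pointer to \cite[Proposition~A.2.6.8]{lurie-topos}, so there is no in-paper argument to compare yours against. Your outline is the standard published proof of Smith's theorem (Beke, Lurie, Barwick): run the small object argument on $\I$ to get the (cofibration, trivial fibration) factorizations, use the accessibility of $\we$ to extract a set $\J \subseteq \Icof \cap \we$ of maps with $\kappa$-presentable domain and codomain satisfying $\Jcof = \Icof \cap \we$, run the small object argument on $\J$, and finish with the 2-out-of-3 plus retract argument showing $\Jinj \cap \we \subseteq \Iinj$; that last verification is written correctly. The one place where your sketch is materially thinner than the real argument is exactly the step you flag as the main obstacle: writing $f \in \Icof \cap \we$ as a $\kappa$-filtered colimit of maps with presentable (co)domains does not by itself put $f$ in $\Jcof$, because $\Jcof$ is not closed under arbitrary filtered colimits in the arrow category. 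The actual proof constructs a transfinite tower of pushouts of maps of $\J$ mapping to $f$ and uses the accessible embedding of $\we$ together with condition (1) to show the tower converges to $f$ (Lurie's Lemmas A.2.6.9--A.2.6.12, or Beke's Proposition 1.15). Since you explicitly identify this as the remaining difficulty rather than claiming it is done, your proposal is a correct roadmap of the standard argument rather than a complete proof, and it is consistent with the source the paper itself defers to.
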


We will also need the following theorem (see, for example, \cite[Theorem~2.1.19]{hovey}):
\begin{thm}[mod-cof]
Suppose that $\C$ is a complete and cocomplete category, $\we$ is a class of morphisms of $\C$, and $\I$, $\J$ are sets of morphisms of $\C$.
Then $\C$ is a cofibrantly generated model category with $\I$ as the set of generating cofibrations,
$\J$ as the set of generating trivial cofibrations, and $\we$ as the class of weak equivalences if and only if the following conditions are satisfied:
\begin{enumerate}
\item The domains of $\I$ and $\J$ are small relative to $\Icell$.
\item $\we$ has 2-out-of-3 property and is closed under retracts.
\item $\Iinj \subseteq \we$.
\item $\Jcell \subseteq \we \cap \Icof$.
\item Either $\Jinj \cap \we \subseteq \Iinj$ or $\Icof \cap \we \subseteq \Jcof$.
\end{enumerate}
\end{thm}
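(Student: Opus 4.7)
The plan is to verify the ``if'' direction, since ``only if'' follows essentially from the definition of a cofibrantly generated model category. Assume conditions (1)--(5); define cofibrations as $\Icof$, fibrations as $\Jinj$, and take $\we$ as the class of weak equivalences, and then verify the model category axioms.

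The two factorizations come from Quillen's small object argument, which applies by (1). Applied to $\I$, it factors any map as $f = p \circ i$ with $i \in \Icell$ (a cofibration) and $p \in \Iinj$. By (3), $p \in \we$; and since $\J \subseteq \Jcell \subseteq \Icof$ by (4), every map in $\Iinj$ lifts against $\J$, giving $p \in \Jinj$, so $p$ is a trivial fibration. Applied to $\J$, it factors any map as $f = p \circ i$ with $i \in \Jcell \subseteq \we \cap \Icof$ (by (4), a trivial cofibration) and $p \in \Jinj$ (a fibration).

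For the lifting axiom, cofibrations lift against $\Iinj$ by definition, so the key step is to identify trivial fibrations with $\Iinj$ (equivalently, trivial cofibrations with $\Jcof$). The forward inclusion was essentially shown above. For the reverse, take $f \in \Jinj \cap \we$ and factor it through the $\I$-small object argument as $f = p \circ i$ with $i \in \Icell$, $p \in \Iinj$; then $p \in \we$ by (3), so $i \in \we$ by 2-out-of-3, placing $i$ in $\Icof \cap \we$. Condition (5) then supplies a lift of $i$ against $f$, exhibiting $f$ as a retract of $p \in \Iinj$, so $f \in \Iinj$. Closure of $\we$ under retracts comes from (2); closure of the other classes follows from their definitions by lifting properties.

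The main obstacle is reconciling the two small-object-argument factorizations with the independently specified class $\we$. Condition (5), which looks technical at first glance, is exactly the device needed for the retract argument identifying $\Icof \cap \we$ with $\Jcof$; without it there would be a gap between the cofibration--trivial fibration factorization produced by $\I$ and the trivial cofibration--fibration factorization produced by $\J$.
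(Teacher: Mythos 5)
The paper does not actually prove this theorem: it is quoted as a known recognition principle, with a pointer to \cite[Theorem~2.1.19]{hovey}, so there is no in-paper argument to compare yours against. Your proposal reproduces the standard proof of that result, and its architecture is correct: the small object argument (legitimate by (1), noting that smallness relative to $\Icell$ passes to $\Jcell \subseteq \Icof$) gives the two factorizations, conditions (3) and (4) make the $\I$-factorization a (cofibration, trivial fibration) pair and the $\J$-factorization a (trivial cofibration, fibration) pair, and a retract argument converts condition (5) into the identification of the distinguished classes.

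The one real incompleteness is in the lifting axiom, because (5) is a disjunction. Your retract argument (factor $f \in \Jinj \cap \we$ as $p \circ i$ with $i \in \Icell$, $p \in \Iinj$, get $i \in \Icof \cap \we$ by 2-out-of-3, lift, retract) uses the \emph{second} disjunct $\Icof \cap \we \subseteq \Jcof$ to supply the lift of $i$ against $f$; under the first disjunct the conclusion $\Jinj \cap \we \subseteq \Iinj$ is given outright. Either way you obtain the half of the lifting axiom pairing cofibrations with trivial fibrations. But the other half --- that every map in $\Icof \cap \we$ has the left lifting property against $\Jinj$ --- is not ``equivalent'' to this in the way your parenthetical suggests; it must be proved, and under the first disjunct of (5) it requires the dual retract argument: factor $i \in \Icof \cap \we$ as $q \circ j$ with $j \in \Jcell$ and $q \in \Jinj$, deduce $q \in \we$ from (4) and 2-out-of-3, hence $q \in \Iinj$ by the inclusion already established, lift $i$ against $q$, and conclude that $i$ is a retract of $j$ and hence lies in $\Jcof$. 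This is the same technique applied symmetrically, so it is an omission rather than a conceptual error, but as written your proof only closes under the second disjunct of (5).
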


Now, we can prove our main result:

\begin{thm}[mark-main]
Let $\C$ be a model category, let $\mathcal{K}$ be a multiset of cofibrant objects of $\C$, and let $\mathcal{J}$ be a set of cofibrant marked objects.
Suppose that one of the following conditions hold:
\begin{enumerate}
\item \label{it:comb} $\C$ is a combinatorial model category.
\item \label{it:all-fib} Every object of $\C$ is fibrant and there exists a set $\I$ of generating cofibrations such that
the domains and the codomains of maps in $\I$ are small relative to $\Icell$.
\end{enumerate}
Then there exists a cofibrantly generated model structure on $\C^m$.
Both adjoint pairs $(-)^\flat \dashv U$ and $U \dashv (-)^\sharp$ are Quillen pairs.
If $\C$ is left proper, then so is $\C^m$.
A marked object $X$ is fibrant in $\C^m$ if and only if the following conditions hold:
\begin{itemize}
\item Underlying object $U(X)$ is fibrant in $\C$
\item For every $J \in \mathcal{J}$, $X$ has RLP with respect to $U(J)^\flat \to J$.
\item Marked maps in $X$ are stable under homotopy (that is, if two maps $K \to U(X)$ are homotopic and one of them is marked, then so is the other).
\end{itemize}
\end{thm}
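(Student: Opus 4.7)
The plan is to apply one of Theorem~\rthm{mod-comb} (in case~(\ref{it:comb})) or Theorem~\rthm{mod-cof} (in case~(\ref{it:all-fib})), with $\I^m$ as the generating cofibrations and $\we$ as defined above. Most of the required hypotheses are already in hand: \rprop{mark-comb} gives local presentability of $\C^m$, \rprop{2-out-of-3} gives the $2$-out-of-$3$ property for $\we$, \rprop{iinj} gives $\Iinj[\I^m] \subseteq \we$, and \rprop{pushouts-trans-comp} gives closure of trivial cofibrations under pushouts and transfinite compositions.

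For case~(\ref{it:comb}), the only remaining hypothesis of \rthm{mod-comb} is accessibility of $\we$ as a full subcategory of the arrow category of $\C^m$. I would deduce this from \rlem{we}, which exhibits $\we$ as the intersection of $U^{-1}$ of the (accessible) weak equivalences of $\C$ with a class cut out by a homotopy condition on $R^m$. Since the fibrant replacement $R$ in a combinatorial model category is accessible, and the condition on markings in $R^m(Y)$ can be phrased in terms of filtered colimits in $\Set$ of hom-sets out of the $\lambda$-presentable objects $\mathcal{F}(K)$ for sufficiently large $\lambda$, the class $\we$ is itself accessible.

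For case~(\ref{it:all-fib}), since every object of $\C$ is fibrant one may take $R = \mathrm{id}$, which renders the description of $R^m$ and of $\we$ purely combinatorial. I would then assemble a set $\J^m$ of generating trivial cofibrations from three families: (a) the flats $j^\flat$ of a set of generating trivial cofibrations of $\C$; (b) the maps $U(J)^\flat \to J$ for $J \in \mathcal{J}$; and (c) maps built from a cofibrant cylinder object on each $K \in \mathcal{K}$ that force marked maps out of $K$ to be closed under cylinder homotopy. Smallness of the domains and codomains with respect to $\Icell[\I^m]$ reduces to smallness in $\C$, since the set of markings on a presentable object is itself bounded. The remaining hypotheses of \rthm{mod-cof} then follow from the earlier propositions together with a direct check that the new generators lie in $\we \cap \Icof[\I^m]$.

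The auxiliary claims are then routine. The functor $U$ carries $\I^m$-cofibrations to cofibrations of $\C$ and preserves weak equivalences by definition, so $U \dashv (-)^\sharp$ is Quillen; the functor $(-)^\flat$ sends generating (trivial) cofibrations of $\C$ into $\I^m$ (respectively into family~(a) of $\J^m$), so $(-)^\flat \dashv U$ is Quillen as well. Left properness follows because $U$ of a pushout in $\C^m$ is a pushout in $\C$, and the marking data of such a pushout is compatible with the characterization in \rlem{we}. For the fibrant object description, RLP against the three families in $\J^m$ translates directly into the three listed conditions. The main obstacle will be the accessibility step in case~(\ref{it:comb}), as it requires careful control of accessibility ranks across the interacting functors $U$, $R$, and the marking subfunctors; a secondary subtlety is to design the cylinder-based generators in case~(\ref{it:all-fib}) so that RLP against them encodes exactly homotopy-closure of markings, without spurious additional constraints.
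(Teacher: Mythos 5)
Your overall strategy coincides with the paper's: apply \rthm{mod-comb} in case~\eqref{it:comb} and \rthm{mod-cof} in case~\eqref{it:all-fib}, with $\I^m$ as generating cofibrations, feeding in \rprop{mark-comb}, \rprop{2-out-of-3}, \rprop{iinj} and \rprop{pushouts-trans-comp}. Your accessibility argument for case~\eqref{it:comb} is the same route the paper takes (the paper makes it precise by choosing $R^m$ to be a composite of two small-object-argument functors, for $\J^\flat$ and for $\J_\mathcal{K} \cup \J_0$, so that $R^m$ preserves $\kappa$-filtered colimits and one can invoke \cite[Corollary~A.2.6.5]{lurie-topos}), and your treatment of the Quillen pairs, left properness, and the fibrant-object characterization matches the paper in outline.

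There is, however, one concrete gap in case~\eqref{it:all-fib}: your proposed set of generating trivial cofibrations has three families, whereas the paper needs four. Besides $\J_\I^\flat$, $\J_0 = \{U(J)^\flat \to J\}$ and the cylinder family $\J_\mathcal{K}$, the paper adds a family $\J_1$ of maps $Z_{J,k}^\flat \to (Z_{J,k}, G\{g \circ i_1\})$, where $Z_{J,k}$ is the pushout of the cylinder inclusion $i_0 : K \to C(K)$ along a marked map $k : K \to U(J)$. This family is exactly what is needed to verify condition~(5) of \rthm{mod-cof}: given a weak equivalence $f : X \to Y$ with RLP against the generators and a map $k : K \to U(X)$ with $U(f) \circ k$ marked, the definition of weak equivalence only tells you that $k$ is marked in $R^m(X)$, and one of the two ways this can happen is that $k$ is homotopic to $h \circ k'$ for some marked $k' : K \to U(J)$ and some $h : U(J) \to U(X)$. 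Your families cannot close this case: to use RLP against $U(J)^\flat \to J$ you would need $U(f) \circ h$ to be a map of marked objects $J \to Y$, which is not available (neither $X$ nor $Y$ is assumed fibrant here), and $\J_\mathcal{K}$ only propagates markedness from a map already known to be marked in $X$. The $\J_1$ generators package the homotopy $C(K) \to U(X)$ and the map $h$ into a single map out of $Z_{J,k'}$ and directly force $k$ to be marked. Without them (or an equivalent device), the inclusion $\Jinj[^m] \cap \we \subseteq \Iinj[\I^m]$ fails to be established whenever $\mathcal{J} \neq \varnothing$, which is precisely the situation the theorem is designed for. A smaller remark: in case~\eqref{it:comb} Smith's theorem gives no explicit generating set, so the ``RLP against the three families translates directly'' step for the fibrant-object characterization needs the separate lifting argument against arbitrary trivial cofibrations via the fibrant replacement, as in the paper.
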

\begin{proof}
Every map in $\Iinj[\I^m]$ is a weak equivalence by \rprop{iinj},
trivial cofibrations are stable under pushouts and transfinite compositions by \rprop{pushouts-trans-comp},
weak equivalences has 2-out-of-3 property by \rprop{2-out-of-3},
and it is easy to see that they are closed under retracts.

First, assume that \eqref{it:comb} holds.
By \rprop{mark-comb}, $\C^m$ is locally presentable.
By \rthm{mod-comb}, we just need to prove that the class of weak equivalences is an accessible subcategory of $\C^m$.
Let $\J$ be a set of generating trivial cofibrations in $\C$.
Then we can take $R$ to be the functor obtained form the small object argument for $\J$.
Let $\J_\mathcal{K}$ be the set of maps $(C(K), G \{i_0\}) \to (C(K), G \{i_0,i_1\})$ for every $K \in \mathcal{K}$, where $[i_0,i_1] : K \amalg K \to C(K)$ is a cylinder object for $K$.
Let $\J_0$ be the set of maps $U(J)^\flat \to J$ for every $J \in \mathcal{J}$.
Then $R^m$ can be described as the composition of two functors $R_1$ and $R_2$.
The former is obtained from the small object argument for $\J^\flat$ and the latter from the small object argument for $\J_\mathcal{K} \cup \J_0$.
Thus $R^m$ preserves $\kappa$-filtered colimits for some regular cardinal $\kappa$.

Let $\C^m_0$ be the full subcategory of the arrow category of $\C^m$ on maps $f$ such that $U(f)$ is a weak equivalence and $f$ reflects marked maps.
Since $R^m$ preserves $\kappa$-filtered colimits, by \cite[Corollary~A.2.6.5]{lurie-topos},
to prove that the class of weak equivalences is an accessible subcategory of $\C^m$,
we just need to show that $\C^m_0$ is closed under $\kappa$-filtered colimits.
But this is obvious if we take $\kappa$ such that the class of weak equivalences in $\C$
is closed under $\kappa$-filtered colimits and every object in $\mathcal{K}$ is $\kappa$-presentable.

Now, assume that \eqref{it:all-fib} holds.
By \cite[Corollary~3.2]{f-model-structures}, there exists a set $\J_\I$ of generating trivial cofibrations for $\C$.
For every $J \in \mathcal{J}$ and marked $k : K \to U(J)$, let $Z_{J,k}$ be the following pushout:
\[ \xymatrix{ K \ar[r]^k \ar[d]_{i_0} & U(J) \ar[d] \\
              C(K) \ar[r]_g & \po Z_{J,k}
            } \]
Let $\J_1$ be the set of maps of the form $Z_{J,k}^\flat \to (Z_{J,k}, G\{ g \circ i_1 \})$ for every $J \in \mathcal{J}$ and marked $k : K \to U(J)$.
Then $\J^m = \J_\I^\flat \cup \J_\mathcal{K} \cup \J_0 \cup \J_1$ is a set of generating trivial cofibrations for $\C^m$.
Indeed, every map in $\J^m$ is cofibration and a weak equivalence.
Every map $f \in \J_\I^\flat$ is a weak equivalence since its underlying map is a weak equivalence and its codomain is flat.
Every map $f \in \J_\mathcal{K} \cup \J_1$ is a weak equivalence since $R^m(f)$ is an identity.

By \rthm{mod-cof}, we just need to prove that every weak equivalence that has RLP with respect to $\J^m$ also has RLP with respect to $\I^m$.
Let $f : X \to Y$ be such a map.
Since $U(f)$ is a weak equivalence and a fibration, it has RLP with respect to $\I$.
Hence $f$ has RLP with respect to $\I^\flat$.
Since every object of $\C$ is fibrant, we can take $R$ to be the identity functor.
Let $k : K \to U(X)$ be a map such that $U(f) \circ k$ is marked.
Since $f$ is a weak equivalence, $k$ is marked in $R^m(X)$.
Thus either there exists a marked map $k' : K \to U(X)$ which is homotopic to $k$,
or there exist $J \in \mathcal{J}$, a marked map $k' : K \to U(J)$ and $h : U(J) \to U(X)$ such that $h \circ k'$ is homotopic to $k$.
If the first case holds, then $U(f) \circ k$ and $U(f) \circ k'$ are homotopic and are both marked.
Since $f$ has RLP with respect to $\J_\mathcal{K}$ and $k'$ is marked, $k$ is also marked.
If the second case holds, then there is a map $z : Z_{J,k'} \to U(X)$ such that $z \circ g \circ i_0 = k$.
Since $f$ has RLP with respect to $\J_1$ and $U(f) \circ k$ is marked, $k$ is also marked.
Thus $f$ has RLP with respect to $K^\flat \to (K, G \{id\})$ for every $K \in \mathcal{K}$.

This completes the construction of the model structure.
Now, let us consider the characterization of fibrant objects in this model category.
We need to prove that a marked object is fibrant if and only if it has RLP with respect to $\J^\flat \cup \J_\mathcal{K} \cup \J_0$,
where $\J$ is a set of generating trivial cofibrations for $\C$.
The ``only if'' direction follows from the fact that $\J^\flat \cup \J_\mathcal{K} \cup \J_0$ consists of trivial cofibrations.
Conversely, let $Z$ be a marked object that has RLP with respect to $\J^\flat \cup \J_\mathcal{K} \cup \J_0$.
Let $f : X \to Y$ be a trivial cofibration, and let $h : X \to Z$ be a map.
Since $U(Z)$ is fibrant, $U(h)$ factors through $t_{U(X)}$:
\[ \xymatrix{ U(X) \ar[r]^-{t_{U(X)}} \ar[d]_{U(f)} & R(U(X)) \ar[r]^h & U(Z) \\
              U(Y) \ar@{-->}[ur]_g
            } \]

Since $U(f)$ is a trivial cofibration, there exists a lift $g : U(Y) \to R(U(X))$ in the diagram above.
We just need to prove that $h \circ g$ preserves marked maps.
Let $k : K \to U(Y)$ be a marked map.
Since $f$ is a trivial cofibration, either there exists a marked map $k' : K \to U(X)$ such that $t_{U(X)} \circ k'$ is homotopic to $g \circ k$,
or there exist $J \in \mathcal{J}$, a marked map $k' : K \to U(J)$, and a map $j : U(J) \to R(U(X))$ such that $j \circ k'$ is homotopic to $g \circ k$.
If the first case holds, then $h \circ g \circ k$ is marked since marked maps in $Z$ are stable under homotopy.
If the second case holds, then $h \circ j \circ k'$ is marked since $Z$ has RLP with respect to $U(J)^\flat \to J$.
Hence $h \circ g \circ k$ is also marked.
Thus $h \circ g$ preserves marked maps.

The fact that both adjoint pairs of functors $(-)^\flat \dashv U$ and $U \dashv (-)^\sharp$ are Quillen adjunctions
easily follows from the fact that both $(-)^\flat$ and $U$ preserve cofibrations and weak equivalences.

Finally, let us prove that if $\C$ is left proper, then so is $\C^m$.
Let $Z \to T$ be a pushout of a weak equivalence $f : X \to Y$ along a cofibration.
We may assume that $f$ is a trivial fibration.
Let $h : K \to R(U(Z))$ be a map, and let $k : K \to U(T)$ be a marked map.
Then $k$ factors through either $U(Z)$ or $U(Y)$.
In the first case, $K \to U(Z) \xrightarrow{t_{U(Z)}} R(U(Z))$ is homotopic to $h$.
Since the first map is marked, $h$ is also marked.
In the second case, we have a marked map $k' : K \to U(Y)$.
Since $f : X \to Y$ is a trivial fibration, $k'$ lifts to a marked map $K \to U(X)$.
Then $K \to U(X) \to U(Z) \to R(U(Z))$ is homotopic to $h$.
Since the first map is marked, $h$ is also marked.
Thus, in either case, $h$ is marked; hence $Z \to T$ is a weak equivalence.
\end{proof}

Since we often want to localize the model category of marked object, it is useful to have a simple description of homotopy function complexes.
Let $X$ and $Y$ be a pair of marked objects such that $X$ is cofibrant and $Y$ is fibrant.
Let $QU(X)^\bullet$ be a cosimplicial frame on $U(X)$.
Then vertices of the Kan complex $\Map(U(X),U(Y)) = \Hom(QU(X)^\bullet,U(Y))$ can be identified with maps $U(X) \to U(Y)$.
We will denote this bijection by $e : \Hom(\Delta^0,\Map(U(X),U(Y))) \simeq \Hom(U(X),U(Y))$.

Let $x$ and $y$ be a pair of vertices of $\Map(U(X),U(Y))$ that belongs to the same component.
If $e(x) : U(X) \to U(Y)$ lifts to a map of marked objects, then so does the map $e(y)$.
Indeed, there exists an edge $\Delta^1 \to \Map(U(X),U(Y))$ between $x$ and $y$.
But such an edge corresponds to a homotopy between $e(x)$ and $e(y)$.
Since $Y$ is fibrant and $e(x)$ preserves marked maps, so does every map homotopic to $e(x)$.

Let $\Map(X,Y)$ be the Kan subcomplex of $\Map(U(X),U(Y))$ which consists of those components of
$\Map(U(X),U(Y))$ in which some (and hence every) vertex corresponds to a map $U(X) \to U(Y)$ that lifts to a map of marked objects.

\begin{lem}[mark-map]
Kan complex $\Map(X,Y)$ is equivalent to the homotopy function complex from $X$ to $Y$.
\end{lem}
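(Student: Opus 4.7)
The plan is to build a cosimplicial frame $QX^\bullet$ on $X$ in $\C^m$ from any chosen cosimplicial frame $QU(X)^\bullet$ on $U(X)$ in $\C$. For each $n$, I take $U(QX^n) = QU(X)^n$ and declare a map $k : K \to QU(X)^n$ to be marked in $QX^n$ precisely when $p_n \circ k : K \to U(X)$ is marked in $X$, where $p_n$ is the structure map induced by the unique arrow $[n] \to [0]$ in $\Delta$. Since $p_m \circ \phi^* = p_n$ for every cosimplicial map $\phi^* : QU(X)^n \to QU(X)^m$, this defines a cosimplicial object of $\C^m$. Note $QX^0 = X$, and Reedy cofibrancy of $QX^\bullet$ in $\C^m$ is immediate from that of $QU(X)^\bullet$, because cofibrations of $\C^m$ are exactly the maps whose underlying map is a cofibration.

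Next I verify that $p_n : QX^n \to X$ is a weak equivalence in $\C^m$. The underlying map is a weak equivalence, so by \rlem{we} it suffices to show: for every $h : K \to R(QU(X)^n)$ and every marked $k : K \to U(X)$ with $R(p_n) \circ h \sim t_{U(X)} \circ k$, the map $h$ is marked in $R^m(QX^n)$. Let $s_n : U(X) \to QU(X)^n$ be any section of $p_n$ (e.g.\ the structure map induced by $0 \in [n]$). Then $k' = s_n \circ k$ is marked in $QX^n$, and we have $R(p_n) \circ t_{QU(X)^n} \circ k' = t_{U(X)} \circ k \sim R(p_n) \circ h$. Since $R(p_n)$ is a weak equivalence between fibrant objects and $K$ is cofibrant, this forces $t_{QU(X)^n} \circ k' \sim h$, exhibiting $h$ as marked in $R^m(QX^n)$. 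Thus $QX^\bullet$ is a cosimplicial frame on the cofibrant object $X$, and with $Y$ fibrant in $\C^m$, the simplicial set $\Hom_{\C^m}(QX^\bullet, Y)$ is a model for the homotopy function complex.

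It remains to identify this simplicial set with $\Map(X,Y)$. The forgetful functor embeds it as a subcomplex of $\Hom_\C(QU(X)^\bullet,U(Y)) = \Map(U(X),U(Y))$, and its vertices are exactly the morphisms of marked objects $X \to Y$, which coincide with the vertices of $\Map(X,Y)$ because fibrancy of $Y$ makes marked maps in $U(Y)$ stable under homotopy. More generally I must show that an $n$-simplex $f : QU(X)^n \to U(Y)$ of $\Map(X,Y)$ (i.e.\ one whose vertex $f \circ \sigma_0^n$ is a morphism of marked objects, where $\sigma_0^n$ is induced by $0 \in [n]$) respects the markings on $QX^n$. One direction, that $f$ respecting markings forces $f \circ \sigma_i^n$ to be a morphism of marked objects, is immediate since $p_n \circ \sigma_i^n = \mathrm{id}$.

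The main obstacle is the converse. Given a marked $k : K \to QU(X)^n$ in $QX^n$ (so $p_n \circ k$ is marked in $X$), the candidate marked map in $U(Y)$ is $f \circ \sigma_0^n \circ p_n \circ k$, which is marked because $f \circ \sigma_0^n$ is a morphism of marked objects. To transfer markedness to $f \circ k$, I factor $f = \tilde f \circ t_{QU(X)^n}$ through fibrant replacement, which is possible because $U(Y)$ is fibrant in $\C$ (a consequence of $Y$ being fibrant in $\C^m$). The equality $R(p_n) \circ t_{QU(X)^n} \circ k = R(p_n) \circ t_{QU(X)^n} \circ \sigma_0^n \circ p_n \circ k$ together with the fact that $R(p_n)$ is a weak equivalence between fibrant objects and $K$ is cofibrant yields $t_{QU(X)^n} \circ k \sim t_{QU(X)^n} \circ \sigma_0^n \circ p_n \circ k$. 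Post-composing with $\tilde f$ gives $f \circ k \sim f \circ \sigma_0^n \circ p_n \circ k$, and homotopy stability of marked maps in the fibrant object $Y$ shows that $f \circ k$ is marked. This completes the identification $\Hom_{\C^m}(QX^\bullet,Y) = \Map(X,Y)$.
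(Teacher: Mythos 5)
Your proof is correct and takes essentially the same route as the paper: you construct the same marked cosimplicial frame $QX^\bullet$ (marking $k : K \to QU(X)^n$ exactly when $p_n \circ k$ is marked in $X$) and then identify $\Hom_{\C^m}(QX^\bullet,Y)$ with $\Map(X,Y)$ by showing every marked $k$ becomes homotopic to one factoring through a vertex and invoking homotopy-stability of marked maps in the fibrant $Y$. The only (cosmetic) difference is that the paper takes a fibrant frame and checks $QX^n \to X$ is a trivial fibration by lifting against $\I^m$, whereas you verify it is a weak equivalence via \rlem{we} and route the needed homotopies through the fibrant replacement $R$.
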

\begin{proof}
We may assume that $QU(X)^\bullet$ is a fibrant cosimplicial frame on $U(X)$.
Then we can define a fibrant cosimplicial frame $QX^\bullet$ on $X$ as $(QU(X)^\bullet,\mathcal{E})$,
where $\mathcal{E}$ consists of those maps $K \to QU(X)^\bullet$ such that $K \to QU(X)^\bullet \to U(X)$ is marked.
Map $L_n QX^\bullet \to QX^n$ is a cofibration since its underlying map $L_n QU(X)^\bullet \to QU(X)^n$ is a cofibration.
Map $QX^n \to X$ has RLP with respect to $\I^\flat$ since $QU(X)^\bullet$ is fibrant.
It also has RLP with respect to $K^\flat \to (K, G \{id\})$ by definition of $QX^n$.
Thus it is a trivial fibration.

Kan complex $\Hom(QX^\bullet,Y)$ is a subcomplex of $\Map(X,Y)$.
Let us prove that they are actually equal.
Consider a simplex $\Delta^n \to \Map(X,Y)$.
Such simplex corresponds to a map $QU(X)^n \to U(Y)$.
We need to show that this map lifts to a map $QX^n \to Y$.
Let $k : K \to QU(X)^n$ be a marked map.
Then it is homotopic to $K \xrightarrow{k} QU(X)^n \to U(X) \simeq QU(X)^0 \to QU(X)^n$,
where the last map is given by any map $\Delta^0 \to \Delta^n$.
By definition of $\Map(X,Y)$ every such map gives a marked map in $Y$.
Since $Y$ is fibrant, $K \xrightarrow{k} QU(X)^n \to U(Y)$ is also marked.
\end{proof}

\section{Marked simplicial sets}
\label{sec:marked-simp-sets}

In this section for every set $\mathcal{L}$ of simplicial sets, we define a model category $\sSet_\mathcal{L}$ of marked simplicial sets
which represents the $(\infty,1)$-category of $(\infty,1)$-categories which have a limit for every diagram of every shape from $\mathcal{L}$.
In particular, if $\mathcal{L} = \{ \varnothing, \Lambda^2_2 \}$, then $\sSet_\mathcal{L}$ represents the $(\infty,1)$-category of finitely complete $(\infty,1)$-categories.

For every set of simplicial sets $\mathcal{L}$, we define set $\mathcal{K}$ as $\{ \Delta^0 \join L\ |\ L \in \mathcal{L} \}$.
Let $\sSet_\mathcal{L}$ be the category of \K-marked objects.
Let $\J$ be the following set of maps of $\sSet_\mathcal{L}$:
\begin{enumerate}
\item For every $L \in \mathcal{L}$, maps $L^\flat \to (\Delta^0 \join L, \{ id : \Delta^0 \join L \to \Delta^0 \join L \})$.
\item For every $n > 0$ and $L \in \mathcal{L}$, maps $(\partial \Delta^n \join L, \{ \Delta^{\{n\}} \join L : \Delta^0 \join L \to \partial \Delta^n \join L \}) \to (\Delta^n \join L, \{ \Delta^{\{n\}} \join L : \Delta^0 \join L \to \Delta^n \join L \})$.
\end{enumerate}
We define a model structure on $\sSet_\mathcal{L}$ as the left Bousfield localization of the model structure
defined in section~\ref{sec:model-structure} (with $\mathcal{J} = \varnothing$) with respect to $\J$.
The following propositions give us a characterization of fibrant objects and fibrations between them in $\sSet_\mathcal{L}$.

\begin{prop}[mark-fib-obj]
For every marked simplicial set $Z$, the following conditions are equivalent:
\begin{enumerate}
\item \label{it:fib} $Z$ is a fibrant object of $\sSet_\mathcal{L}$.
\item \label{it:exp} $Z$ is a quasi-category which has a limit for every diagram of every shape from $\mathcal{L}$, and a cone is marked in $Z$ if and only if it is a limit cone.
\end{enumerate}
\end{prop}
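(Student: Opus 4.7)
My plan is to combine Theorem \rthm{mark-main} (applied to the unlocalized model structure on marked simplicial sets, with $\mathcal{J}=\varnothing$) with the universal property of the left Bousfield localization and the explicit description of homotopy function complexes from Lemma \rlem{mark-map}. By Theorem \rthm{mark-main} with $\mathcal{J}=\varnothing$, a marked simplicial set $Z$ is fibrant in the unlocalized model structure iff $U(Z)$ is a quasi-category and the marking on $Z$ is stable under homotopy; fibrancy in $\sSet_\mathcal{L}$ additionally requires $Z$ to be $\J$-local, i.e.\ $\Map(B,Z)\to\Map(A,Z)$ is a weak equivalence for every $A\to B$ in $\J$. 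By Lemma \rlem{mark-map}, for any marked simplicial set $A$ the mapping space $\Map(A,Z)$ is the union of those components of $\Map(U(A),U(Z))$ whose vertices lift to maps of marked objects.

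For \eqref{it:exp}$\Rightarrow$\eqref{it:fib}, the quasi-category condition holds by hypothesis, and homotopy-stability of the marking follows because two homotopic cones $c_1,c_2:\Delta^0\join L\to U(Z)$ restrict to equivalent diagrams in $U(Z)$ under which the induced equivalence of slices identifies $c_1$ and $c_2$, and terminal-ness (limit-cone-ness) is preserved by such an equivalence. For locality with respect to the first family $L^\flat\to(\Delta^0\join L,\{id\})$, the restriction map from the subcomplex of marked-cone components in $\Map(\Delta^0\join L,U(Z))$ to $\Map(L,U(Z))$ has fiber over a diagram $d$ equal to the subspace of limit cones inside the core of the slice $U(Z)_{/d}$, which is nonempty and contractible by existence and essential uniqueness of limits; hence this restriction is a weak equivalence. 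For the second family, fix a marked cone $c$ over $d$ (a terminal object of $U(Z)_{/d}$); the $\J$-locality reduces, fiberwise over $d$, to the statement that every map $\partial\Delta^n\to U(Z)_{/d}$ with last vertex $c$ extends to $\Delta^n\to U(Z)_{/d}$ with contractible space of extensions, which is Joyal's standard lifting characterization of terminal objects in quasi-categories.

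For \eqref{it:fib}$\Rightarrow$\eqref{it:exp}, base-fibrancy yields that $U(Z)$ is a quasi-category and that the marking is homotopy-stable. Applying $\J$-locality for the first family at the level of $\pi_0$ shows that every diagram $L\to U(Z)$ is, up to equivalence, the restriction of some marked cone; using that $U(Z)$ is Joyal-fibrant to realize this equivalence as a strict extension and then invoking homotopy-stability shows that every $\mathcal{L}$-indexed limit exists and is marked. Conversely, to see that every marked cone is a limit cone, fix a marked $c$ over $d$ and apply $\J$-locality for the second family: the resulting lifting property on $\partial\Delta^n\to U(Z)_{/d}$ with last vertex $c$ (for all $n>0$) forces $c$ to be a terminal object of $U(Z)_{/d}$, hence a limit cone. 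For the converse identification, a limit cone $c$ over $d$ is equivalent in $U(Z)_{/d}$ to any marked cone $c'$ over $d$ produced from the first family, so $c'$ and $c$ lie in the same component of $\Map(\Delta^0\join L,U(Z))$, and homotopy-stability of the marking transfers the marking from $c'$ to $c$.

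The main obstacle will be the careful unwinding of the mapping space computation from Lemma \rlem{mark-map} for the second family of maps in $\J$ and matching the resulting $\J$-locality statement with Joyal's characterization of a limit cone as a terminal object of the slice $U(Z)_{/d}$; the precise marking pattern — only the $\Delta^{\{n\}}\join L$-face being required marked — is exactly what makes this translation possible, and ensuring that the fiberwise reduction over a fixed base diagram is legitimate (e.g.\ using that restriction along $L\to\Delta^n\join L$ is an isofibration between quasi-categories of functors) is the main technical step.
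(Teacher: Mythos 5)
Your proposal is correct and follows essentially the same route as the paper: characterize fibrant objects of the localization as homotopy-stable-marked quasi-categories that are local with respect to $\J$, compute the relevant mapping spaces via \rlem{mark-map}, and reduce locality for both families in $\J$ to contractibility of the space of limit cones, i.e., of terminal objects in slice quasi-categories. The only divergence is at the level of detail you yourself flag as the main obstacle: where you invoke Joyal's lifting characterization of terminal objects for the second family, the paper first reduces the general $L$ to $L=\varnothing$ via fat-join Cartesian squares and then runs an explicit induction on $n$ using $\Lambda^n_1 \hookrightarrow \Delta^n$.
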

\begin{proof}
It is easy to see that \eqref{it:fib} implies \eqref{it:exp}.
To prove the converse, we need to define homotopy function complexes for $\sSet_\mathcal{L}$.
First, recall that the inclusion of Kan complexes into quasi-categories has a right adjoint which we denote by $E$ (see, for example, \cite[Proposition~1.2.5.3]{lurie-topos}).
Let $\Fun(X,Y)$ be the internal Hom in the category of simplicial sets.
By \cite[Corollary~3.1.4.4]{lurie-topos}, $E(\Fun(X,Y))$ is a homotopy function complex in the Joyal model structure.
If $X$ and $Y$ are marked simplicial sets, then by \rlem{mark-map}, we can define a homotopy function complex $\Map(X,Y)$ as certain subcomplex of $E(\Fun(U(X),U(Y)))$.

Let $X$ be a marked simplicial set of the form $(A \join L, \{ p \join L \})$ for some $L \in \mathcal{L}$ and $p : \Delta^0 \to A$.
Then we can give an equivalent definition of a homotopy function complex $\Map'(X,Y)$ in terms of fat join.
Let $\Map'(X,Y)$ be the subcomplex of $E(\Fun(A \fjoin L, Y))$ which consists of those components in which some (and hence every) vertex
corresponds to a map $f : A \fjoin L \to Y$ such that $\Delta^0 \fjoin L \xrightarrow{p \fjoin L} A \fjoin L \xrightarrow{f} Y$ is a limit cone.
Since $A \fjoin L \to A \join L$ is a weak equivalence in the Joyal model structure, induced map $\Map(X,Y) \to \Map'(X,Y)$ is a weak equivalence of Kan complexes.

An object $Z$ of $\sSet_\mathcal{L}$ is fibrant if and only if it is a quasi-category, marked maps in $Z$ are stable under homotopy,
and for every map $X \to Y$ in $\J$, induced map of Kan complexes $\Map(Y,Z) \to \Map(X,Z)$ is a weak equivalence.
Let $Z$ be a marked simplicial set that satisfies \eqref{it:exp}.
Since limits cones are stable under homotopy, we just need to prove that $\Map(Y,Z) \to \Map(X,Z)$ is a weak equivalence for every $X \to Y$ in $\J$.
Thus we only need to prove that for every $p : X \to Z$, the fiber of $\Map(Y,Z) \to \Map(X,Z)$ over $p$ is contractible.

First, let us consider maps of the form $L^\flat \to (\Delta^0 \join L, \{ id \})$.
Since $L^\flat$ is flat, $\Map(L^\flat,Z) = E(\Fun(L,U(Z)))$.
For any simplicial sets $K$, $A$ and $Z'$ and every map $p : K \to Z'$, we have the following Cartesian square:
\[ \xymatrix{ \Fun(A,Z'^{/p}) \ar[r] \ar[d] \pb & \Fun(A \fjoin K, Z') \ar[d] \\
              \Delta^0 \ar[r]_-p & \Fun(K,Z')
            } \]
In particular, if we let $K = L$, $A = \Delta^0$ and $Z' = U(Z)$, then we obtain the following Cartesian square:
\[ \xymatrix{ U(Z)^{/p} \ar[r] \ar[d] \pb & \Fun(\Delta^0 \fjoin L, U(Z)) \ar[d] \\
              \Delta^0 \ar[r]_-p & \Fun(L,U(Z))
            } \]
Since $E$ is a right adjoint, it preserves pullbacks.
Hence fiber of $\Map'((\Delta^0 \join L, \{ id \}), Z) \to \Map(L^\flat, Z)$ is the subcomplex of $E(U(Z)^{/p})$
spanned by vertices that correspond to limit cones, and it is easy to see that it is contractible (see, for example, \cite[Lemma~2.11]{szumilo}).

Now, let us consider maps of the form $(\partial \Delta^n \join L, \{ \Delta^{\{n\}} \join L \}) \to (\Delta^n \join L, \{ \Delta^{\{n\}} \join L \})$.
First, let us consider the special case when $L = \varnothing$.
We prove that fibers of $\Map((\Delta^n, \{ \Delta^{\{n\}} \}), Z) \to \Map((\partial \Delta^n, \{ \Delta^{\{n\}} \}), Z)$ are contractible by induction on $n$.
If $n = 1$, then for every $p : \Delta^0 \to X$, we have the following diagram:
\[ \xymatrix{ U(Z)^{/p} \ar[d] \ar[r] \pb & \Fun(\Delta^1, U(Z)) \ar[d] \\
              U(Z) \ar[d] \ar[r] \pb & \Fun(\partial \Delta^1, U(Z)) \ar[d] \\
              \Delta^0 \ar[r]_p & U(Z)
            } \]
If $p$ is final, then $U(Z)^{/p} \to U(Z)$ is a weak equivalence (see, for example, \cite[Corollary~1.2.12.5]{lurie-topos}).
Since every map $\Delta^0 \to \Map((\partial \Delta^1, \{ \Delta^{\{1\}} \}), Z)$ factors through $U(Z)$ for some final $p$,
fibers of $\Map((\Delta^1, \{ \Delta^{\{1\}} \}), Z) \to \Map((\partial \Delta^1, \{ \Delta^{\{1\}} \}), Z)$ are contractible.

Now, assume $n > 1$.
Consider the following diagram:
\[ \xymatrix{ \Map((\Delta^n, \{ \Delta^{\{n\}} \}), Z) \ar[d] \\
              \Map((\partial \Delta^n, \{ \Delta^{\{n\}} \}), Z) \ar[r] \ar[d] \pb & \Map((\Delta^{n-1}, \{ \Delta^{\{n-1\}} \}), Z) \ar[d] \\
              \Map((\Lambda^n_1, \{ \Delta^{\{n\}} \}), Z) \ar[r] & \Map((\partial \Delta^{n-1}, \{ \Delta^{\{n-1\}} \}), Z)
            } \]
Since $\Lambda^n_1 \to \Delta^n$ is a weak equivalence in the Joyal model structure, map
\[ \Map((\Delta^n, \{ \Delta^{\{n\}} \}), Z) \to \Map((\Lambda^n_1, \{ \Delta^{\{n\}} \}), Z) \]
is also a weak equivalence.
By induction hypothesis, $\Map((\Delta^{n-1}, \{ \Delta^{\{n-1\}} \}), Z) \to \Map((\partial \Delta^{n-1}, \{ \Delta^{\{n-1\}} \}), Z)$ is a weak equivalence.
Since it is a fibration, map
\[ \Map((\partial \Delta^n, \{ \Delta^{\{n\}} \}), Z) \to \Map((\Lambda^n_1, \{ \Delta^{\{n\}} \}), Z) \]
is also a weak equivalence.
Hence by 2-out-of-3 property, $\Map((\Delta^n, \{ \Delta^{\{n\}} \}), Z) \to \Map((\partial \Delta^n, \{ \Delta^{\{n\}} \}), Z)$ is a weak equivalence.

Finally, we consider the general case.
For every $p : L \to U(Z)$, we have the following diagram:
\[ \xymatrix{ \Fun(\Delta^n, U(Z)^{/p}) \ar[r] \ar[d] \pb & \Fun(\Delta^n \fjoin L, U(Z)) \ar[d] \\
              \Fun(\partial \Delta^n, U(Z)^{/p}) \ar[r] \ar[d] \pb & \Fun(\partial \Delta^n \fjoin L, U(Z)) \ar[d] \\
              \Delta^0 \ar[r]_-p & \Fun(L,U(Z))
            } \]
It follows that the following square is Cartesian:
\[ \xymatrix{ \Map(\Delta^n, U(Z)^{/p}) \ar[r] \ar[d] \pb & \Map'(\Delta^n \join L, U(Z)) \ar[d] \\
              \Map(\partial \Delta^n, U(Z)^{/p}) \ar[r] & \Map'(\partial \Delta^n \join L, U(Z))
            } \]
Since every vertex $\Delta^0 \to \Map'(\partial \Delta^n \join L, U(Z))$ factors through $\Map(\partial \Delta^n, U(Z)^{/p})$
for some $p$, it is enough to show that every fiber of \[ \Map(\Delta^n, U(Z)^{/p}) \to \Map(\partial \Delta^n, U(Z)^{/p}) \] is contractible.
But this follows from the special case when $L = \varnothing$.
\end{proof}

\begin{prop}[mark-fib-map]
A map $f : X \to Y$ between fibrant marked simplicial sets is a fibration if and only if its underlying map is a fibration in the Joyal model structure.
\end{prop}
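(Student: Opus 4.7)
The plan is to prove the two directions separately. For the ``only if'' direction, the adjunction $(-)^\flat \dashv U$ remains a Quillen adjunction after passing to $\sSet_\mathcal{L}$ (the cofibrations and trivial cofibrations in the localization contain those from the model structure of \rthm{mark-main}), so $U$ preserves fibrations and $U(f)$ is automatically a Joyal fibration.

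For the ``if'' direction, assume $U(f)$ is a Joyal fibration. Using the standard fact that in a left Bousfield localization a map between local-fibrant objects is a fibration in the localization if and only if it is a fibration in the unlocalized structure, it suffices to show that $f$ has RLP with respect to every trivial cofibration $i : A \to B$ in the model structure from \rthm{mark-main}. Given a commutative square with $a : A \to X$ on top and $b : B \to Y$ on the bottom, I would first use fibrancy of $X$ together with $i$ being a trivial cofibration to extend $a$ to a map of marked objects $a' : B \to X$. This $a'$ preserves marked maps but in general only satisfies $f \circ a' \circ i = b \circ i$, not $f \circ a' = b$.

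To remedy this I would use the pushout-product axiom of the Joyal model structure. Since $U(i)$ is a Joyal trivial cofibration (using that $U$ is left Quillen in $U \dashv (-)^\sharp$) and $U(f)$ is a Joyal fibration, the pullback-hom
\[ \Fun(U(B), U(X)) \longrightarrow \Fun(U(A), U(X)) \times_{\Fun(U(A), U(Y))} \Fun(U(B), U(Y)) \]
is a Joyal trivial fibration, which the right adjoint $E$ converts into a trivial Kan fibration (it preserves limits, and sends Joyal trivial fibrations between quasi-categories to trivial Kan fibrations). By the same reasoning, $E(\Fun(U(B), U(Y))) \to E(\Fun(U(A), U(Y)))$ is a trivial Kan fibration, so its fibre over $U(b \circ i)$ is a contractible Kan complex containing both $U(f \circ a')$ and $U(b)$; connecting these by an edge and lifting it along the pullback-hom trivial fibration starting from $U(a')$ yields $U(a'') : U(B) \to U(X)$ with $U(a'') \circ U(i) = U(a)$ and $U(f) \circ U(a'') = U(b)$, together with a natural equivalence $U(a') \simeq U(a'')$ in $\Fun(U(B), U(X))$.

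Finally I would verify that $U(a'')$ preserves marked maps, so as to promote it to a morphism $a'' : B \to X$ in $\C^m$. For a marked $k : K \to U(B)$, the map $U(a') \circ k$ is marked in $X$ since $a'$ is marked-preserving, and the natural equivalence restricts to a Joyal homotopy between $U(a') \circ k$ and $U(a'') \circ k$ in $U(X)$; by the third fibrancy condition in \rthm{mark-main}, marked maps in $X$ are stable under homotopy, whence $U(a'') \circ k$ is also marked. Thus $a''$ is the required lift, so $f$ is a fibration in $\C^m$ and hence in $\sSet_\mathcal{L}$. The main obstacle is precisely this transfer of markings: a bare Joyal lift of the underlying square need not respect markings, and the argument relies on first producing a marking-preserving extension and then correcting it via a Joyal equivalence, crucially using the homotopy-stability of marked maps in the fibrant $X$.
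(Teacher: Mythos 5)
Your argument is correct, but it takes a genuinely different route from the paper's. The paper reduces the claim to checking RLP against the explicit generating sets $\J$ and $\J_\mathcal{K}$ (via the cited characterization of fibrations with fibrant codomain in a localization), and then verifies those specific liftings: fibrancy of $X$ supplies a marked cone $g$ over the given diagram, the essential uniqueness of limit cones makes $f \circ g$ and the prescribed marked cone in $Y$ homotopic relative to $L$, and this homotopy is lifted along the Joyal fibration $U(f)$. You instead reduce to the unlocalized structure of \rthm{mark-main} via the general fact that a map between local fibrant objects is a fibration in a left Bousfield localization iff it is a fibration in the original model structure (legitimate here since $\C^m$ is left proper and combinatorial by \rthm{mark-main}), and then handle an \emph{arbitrary} trivial cofibration $i$ by an extend-then-correct argument whose correction step uses the pullback-hom of the cartesian closed Joyal model structure rather than anything about limits. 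Both proofs share the same skeleton --- produce a marking-preserving extension from fibrancy of $X$, then repair strict commutativity by a homotopy compatible with $U(f)$, using homotopy-stability of markings to keep the repaired map marking-preserving --- but yours is insensitive to the particular localizing set $\J$ and would prove the analogous statement for any left Bousfield localization of the marked Joyal model structure. The price is reliance on two external inputs: the Hirschhorn-type statement about fibrations between local objects, and the compatibility of $E$ with pullbacks along categorical fibrations (the latter is used elsewhere in the paper in the same informal way, so this is consistent with its level of detail).
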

\begin{proof}
\Rprop{mark-fib-obj} implies that an object of $\sSet_\mathcal{L}$ is fibrant if and only if it is a quasi-category
and has RLP with respect to sets $\J$ and $\J_\mathcal{K}$ (which was defined in \rthm{mark-main}).
By \cite[Proposition~3.6]{f-model-structures}, a map of marked simplicial sets with fibrant codomain is a fibration
if and only if it is a fibration in the Joyal model structure and has RLP with respect to $\J \cup \J_\mathcal{K}$.
Let us prove that every map $f : X \to Y$ between fibrant marked simplicial sets which is a fibration in the Joyal model structure has RLP with respect to $\J \cup \J_\mathcal{K}$.

Since $X$ has RLP with respect to $\J_\mathcal{K}$, $f$ also has RLP with respect to this set.
Consider the following commutative square:
\[ \xymatrix{ L \ar[r] \ar[d] & U(X) \ar[d]^f \\
              \Delta^0 \join L \ar[r]_v \ar@{-->}[ur]^g & U(Y),
            } \]
where $v$ is marked in $Y$.
Since $X$ is fibrant, there exists a marked map $g : \Delta^0 \join L \to U(X)$ such that the top triangle in the square above commutes.
Since $f \circ g$ and $v$ are limit cones, they are homotopic relative to $L$.
Since $U(f)$ is a fibration in the Joyal model structure, we can lift this homotopy to obtain a lift in the square above such that both triangles commute.
The same argument shows that $f$ has RLP with respect to maps $(\partial \Delta^n \join L, \{ \Delta^{\{n\}} \join L \}) \to (\Delta^n \join L, \{ \Delta^{\{n\}} \join L \})$.
\end{proof}

Let $\kappa$ be a regular cardinal, and let $\mathcal{L}$ be the set of all (representatives of) sets with cardinality less than $\kappa$ together with simplicial set $\Lambda^2_2$.
Let $\sSet_\mathcal{L}^f$ be the full subcategory of $\sSet_\mathcal{L}^f$ on the fibrant object.
Then $\sSet_\mathcal{L}^f$ is a category with fibrations.
Let $\qcat^c$ be the category of $\kappa$-complete quasi-categories and functors that preserve $\kappa$-small limits.
A structure of a category with fibrations on $\qcat^c$ was constructed in \cite{szumilo}.
These two categories with fibrations are isomorphic.
Indeed, \rprop{mark-fib-obj} implies that forgetful functor $U : \sSet_\mathcal{L}^f \to \qcat^c$ is an isomorphism of categories.
Thus we just need to show that $U$ preserves weak equivalences and fibrations.
Weak equivalences in both cases are just weak equivalences of underlying quasi-categories.
Fibrations in $\qcat^c$ are fibrations of underlying quasi-categories.
By \rprop{mark-fib-map}, this is also true in $\sSet_\mathcal{L}^f$.

\bibliographystyle{amsplain}
\bibliography{ref}

\end{document}